\documentclass[11pt,reqno]{amsart}
\usepackage{srcltx}
\usepackage{empheq}
\usepackage{bm}
\usepackage{amsmath,amssymb,cases,color,mathtools}
\usepackage{hyperref}
\usepackage[capitalise]{cleveref}
\usepackage{vmargin}
\usepackage{bigints}
\usepackage{relsize}
\usepackage{stmaryrd}

\usepackage{amsmath, amssymb,amscd, }
\usepackage{amsfonts}
\usepackage{mathrsfs}
\usepackage{graphicx}
\usepackage{xcolor}

 \usepackage{upref}
\hypersetup{linkcolor=blue, colorlinks=true,citecolor = red}
\newtheorem {theorem}{Theorem}[section]
\newtheorem {lemma}{Lemma}[section]
\newtheorem {definition}{Definition}[section]
 
\newtheorem {remark}{Remark}[section]

\theoremstyle{definition}

\theoremstyle{remark}
\numberwithin{equation}{section}
 
\newcommand{\mc}{\mathcal}

\newcommand{\dx}{\,{\rm d} {x}}
\newcommand{\dS}{\,{\rm d} {S}}

\makeatletter
\renewcommand*\env@matrix[1][*\c@MaxMatrixCols c]{%
	\hskip -\arraycolsep
	\let\@ifnextchar\new@ifnextchar
	\array{#1}}
\makeatother

\begin{document}

\title[Self-propelled Dirichlet]{Steady Motion of a Self-Propelled Body in a Viscous Fluid: Dirichlet Boundary Conditions with Nonzero Flux}

 \author{\v S\'arka Ne\v casov\'a $^1$\and Arnab Roy$^{2,3}$ \and Ana Leonor Silvestre$^{4}$}
  \thanks{\v S.N. was supported by Premium Academia of {\v{S}}. Ne{\v{c}}asov{\'{a}}. The
Institute of Mathematics, CAS is supported by RVO:67985840.
 A.R is supported by the Grant RYC2022-036183-I funded by MICIU/AEI/10.13039/501100011033 and by ESF+. A.R has been partially supported by the Basque Government through the BERC 2022-2025 program and by the Spanish State Research Agency through BCAM Severo Ochoa CEX2021-001142-S and through project PID2023-146764NB-I00 funded by MICIU/AEI/10.13039/501100011033 and cofunded by the European Union. A. L. S. acknowledges the financial support of Funda\c{c}\~ao para a Ci\^encia e a Tecnologia (FCT), Portuguese Agency for Scientific Research, through the project
UID/PRR/04621/2025 of CEMAT/IST-ID, DOI https://doi.org/10.54499/UID/PRR/04621/2025.}

\date{}

\maketitle

\centerline{$^1$ Institute of Mathematics, Czech Academy of Sciences,}

\centerline{\v Zitn\'a 25, 115 67 Praha 1, Czech Republic.}

\centerline{$^2$ BCAM, Basque Center for Applied Mathematics}

\centerline{Mazarredo 14, E48009 Bilbao, Bizkaia, Spain.}

\centerline{$^3$IKERBASQUE, Basque Foundation for Science, }

\centerline{Plaza Euskadi 5, 48009 Bilbao, Bizkaia, Spain.}

\centerline{$^4$ Department of Mathematics, Instituto Superior T\'{e}cnico,}

\centerline{1049-001 Lisboa, Portugal.}

\bigskip
\bigskip
 
 \begin{center} \it In memory of Professor Hermann Sohr, for his outstanding contributions to the mathematical theory of the Navier-Stokes equations.   \end{center}

\bigskip

\begin{abstract}  
We study the steady self-propelled motion of a rigid body immersed in an incompressible viscous fluid occupying an exterior domain in $\mathbb{R}^3$. In a body-fixed reference frame, the problem is described by a coupled fluid-rigid body system posed in a fixed exterior domain, where the Navier--Stokes equations are coupled with the unknown translational and angular velocities of the rigid body. The self-propulsion is generated by prescribing a boundary velocity on the body surface. Our main result asserts the existence of weak solutions under the sole assumption that the prescribed boundary flux is sufficiently small. The crucial step is the construction of a divergence-free extension of the boundary data that does not rely on either the zero-flux condition or the smallness of the boundary data. As a result, we generalize \cite[Theorem 5.1]{Galdi1999}, where both assumptions were required.
\end{abstract}

\maketitle

 {\bf Keywords.} Incompressible Navier-Stokes equations, Fluid-rigid body interaction, Inhomogeneous boundary values, Exterior domain, Stationary self-propelled motion. 
\\
 {\bf AMS subject classifications.} 35Q30, 76D05, 74F10.

\section{Introduction}
Consider a rigid body $\mathcal S \subset {\mathbb R}^3$ moving by self-propulsion through an incompressible viscous fluid that occupies the exterior domain $\Omega = {\mathbb R}^3 \setminus \mathcal S$, as presented in \cite{GaRev}, where it is assumed that the motion of this mechanical system is described from a reference frame attached to $\mathcal S$, with origin at the center of mass of $\mathcal S$. Following \cite{puk89,Galdi1999,GaRev}, we say that a rigid body is \emph{self-propelled} if it moves without the action of external forces, the propulsion being generated solely through its interaction with the surrounding fluid. This may be realized, for instance, by prescribing inflow through some portions of the boundary $\partial\Omega$ and outflow through others, or by imposing tangential motion on parts of $\partial\Omega$, as in the case of moving belts. Typical examples include jet aircraft, submarines, and microscopic organisms such as ciliates and flagellates (see \cite{GaRev} and the references therein). The mathematical analysis of steady self-propelled motion in a viscous incompressible fluid was initiated by Galdi \cite{Galdi1999}, who proved the existence of steady weak solutions for the coupled fluid-rigid body system subject to Dirichlet boundary conditions. Later, Silvestre \cite{Sil1,MR1953783} investigated further qualitative aspects of these solutions, including the questions of attainability and symmetry. The work \cite{mmnp} extends the theory to density-dependent incompressible viscous fluids by establishing the global existence of weak solutions. In \cite{NRS1}, we study the steady self-propelled motion in a three-dimensional incompressible Navier--Stokes fluid with Navier-slip boundary conditions, proving existence of weak solutions and deriving a necessary and sufficient condition for self-propulsion. The time-dependent self-propulsion problem driven by prescribed periodic shape deformations was investigated in \cite{GP26}, where the authors established the existence of self-propelled motions for sufficiently small-amplitude oscillations and derived explicit expressions for the propulsion velocity in representative examples. The time-periodic self-propulsion problem driven by a prescribed boundary velocity was investigated in \cite{GP20}. The authors established sufficient conditions for net self-propulsion under small-amplitude boundary actuation and showed that the analysis differs fundamentally depending on whether the time average of the prescribed boundary velocity vanishes. In \cite{MTTT08}, the authors establish the global well-posedness (up to possible collisions) of a fluid--structure interaction model for self-propelled deformable solids in a two-dimensional incompressible viscous fluid and propose a numerical method for simulating the motion of multiple swimmers. In \cite{khapa1}, the authors prove the well-posedness for a bio-inspired self-propelled swimmer, represented by a chain of elastically connected rigid parts whose internal forces generate propulsion in two- and three-dimensional incompressible Navier--Stokes fluids.

The mathematical model describing the steady motion of the coupled rigid body–fluid system is formulated by the following system of equations:

\begin{equation}
\label{eqn1:NSD}
\left\{
    \begin{aligned} \displaystyle - \nabla \cdot  {\mathbb T}(v,p)+\mathrm{Re}\left[(v-V)\cdot \nabla v + \omega \times v \right] & = 0 \quad  \mbox{ in } \Omega ,\\ 
    \displaystyle  \nabla \cdot v & =0 \quad  \mbox{ in } \Omega ,\\
    v-V  &= v_{*} \quad  \mbox{ on } \partial \Omega, \\  \displaystyle \lim_{\left| x\right| \rightarrow
\infty }v(x) &=0, \\ 
 \mathrm{Re}\ m \, \omega \times \xi = -\int\limits_{\partial \Omega} \left[ \mathbb{T} (v,p) n \right. - &\left.\mathrm{Re}\ v(v-V)\cdot n \right] \dS , \\ 
 \mathrm{Re}\ \omega \times (J \omega) = -\int\limits_{\partial \Omega} x \times \left[ \mathbb{T} (v,p) n \right. - &\left.\mathrm{Re}\ v(v-V)\cdot n \right] \dS,  
   \end{aligned}
\right.
\end{equation}
where ${\mathbb T}(v,p):=2 {\mathbb D}(v) - p {\mathbb I}$, with ${\mathbb D}(v) := (\nabla v + (\nabla v)^\top)/2$ the rate-of-strain tensor, $n$ is the unit outward normal to $\partial \Omega$, and $V(x):=\xi + \omega \times x$ with $\xi \in {\mathbb R}^3$ and $\omega \in {\mathbb R}^3$. In system \eqref{eqn1:NSD}, the unknowns are the velocity $v: \Omega \to {\mathbb R}^3$ and the pressure $p: \Omega \to {\mathbb R}$ of the fluid, and the velocity $V$ of the rigid body, i.e., $\xi,\omega  \in {\mathbb R}^3$. The constant $\mathrm{Re}$ is the Reynolds number, $m$ is the mass of $\mathcal S$ and $J$ is its inertia matrix. The self-propelled motion of the body is generated by the thrust velocity $v_{*}$ prescribed on $\partial \Omega$.

The aim of this paper is to strengthen the existence result proved in \cite[Theorem 5.1]{Galdi1999}. In order to describe the problem and the main results of this paper,  in what follows, we denote the flux of $v_*$ through $\partial \Omega$ by 
\begin{equation}
\Phi:=\int_{\partial \Omega}  v_* \cdot n \dS.
\label{defflu}
\end{equation} 
The main technical ingredient in the proof of our result is the construction of a solenoidal  extension of the boundary values $v_*$ that does not require neither a null flux $\Phi$ nor a small norm $\|v_*\|_{1/2,2,\partial \Omega}$.

In order to prove existence of a weak solution, we use a classical approximation argument, the so-called invading domains technique, which reduces the problem to bounded domains \cite{Lady,Hey,Sohr2012}. Specifically, one considers a sequence  
$\{\Omega_k\}_{k=1}^{\infty}$ of bounded Lipschitz subdomains with the property
\begin{equation}
\Omega=\bigcup_{k=1}^{\infty}\Omega_k,  \qquad \Omega_k \subset \Omega_{k+1},
\label{omcad}
\end{equation}
and constructs a sequence of approximate solutions $\{v_k\}_{k=1}^{\infty}$ that converges to $v$ as $k \to \infty$ in an appropriate sense. We adopt the framework developed by Hermann Sohr \cite{Sohr2012} to construct weak solutions in the bounded domains $\Omega_k$.

Note that, if the rigid body velocity $V$ is known, the application of the Hopf method and the invading domains technique  to the Navier-Stokes problem 
\begin{equation}
\label{eqn1:NSclassical}
\left\{
    \begin{aligned} \displaystyle - \nabla \cdot  {\mathbb T}(v,p)+\mathrm{Re}\left[(v-V)\cdot \nabla v + \omega \times v \right] & = 0 \quad  \mbox{ in } \Omega ,\\ 
    \displaystyle  \nabla \cdot v & =0 \quad  \mbox{ in } \Omega ,\\
    v &= v_{*} + V \quad  \mbox{ on } \partial \Omega, \\  \displaystyle \lim_{\left| x\right| \rightarrow
\infty }v(x) &=0
   \end{aligned}
\right.
\end{equation}
yields for each $\gamma >0$, a solenoidal extension $\widetilde{v_* + V}$ of the inhomogeneous boundary data satisfying (see \cite{G})
\begin{equation}
 \mathrm{Re}  \left|  \int_{\Omega_R} u\cdot \nabla u \cdot \widetilde{(v_* + V)} \dx  \right| \leq 
\left(\gamma +  C(\Omega) \,  \mathrm{Re} \, | \Phi  | \right) \| \nabla u \|_{2,\Omega_k}^2 , \quad  \forall u \in W^{1,2}_{0,\sigma}(\Omega_k),
\label{esttrinonlcl}
\end{equation}
where $W^{1,2}_{0,\sigma}(\Omega_k)$ is the completion $\overline{C^\infty_{0,\sigma}(\Omega_k)}^{\| \nabla (\cdot) \|_{2,\Omega_k}}$ of $C^\infty_{0,\sigma}(\Omega_R):=\{ u \in C^\infty_0(\Omega_k)^3 \mid \nabla \cdot u = 0 \text{ in } \Omega_k\}$ with respect to the norm $u \mapsto \| \nabla  u \|_{2,\Omega_k}$.  In the case of \eqref{eqn1:NSclassical}, extending each element $u \in \widehat{W}^{1,2}_{0,\sigma}(\Omega_k)$ by zero to the exterior domain $\Omega$,  the trivial continuous embeddings associated to \eqref{omcad} hold
$$
W^{1,2}_{0,\sigma}(\Omega_k) \hookrightarrow W^{1,2}_{0,\sigma}(\Omega_{k+1}) \hookrightarrow \widehat{W}^{1,2}_{0,\sigma}(\Omega) \hookrightarrow L^6(\Omega)^3,
$$
where $\widehat{W}^{1,2}_{0,\sigma}(\Omega) = \overline{C^\infty_{0,\sigma}}^{\| \nabla (\cdot) \|_{2,\Omega}}$. In this way, the sequence of approximate solutions $\{v_k\}_{k=1}^{\infty}$ is such that $v_k = u_k  + \widetilde{v_* + V}$ and $\{u_k\}_{k=1}^{\infty}$  can be treated as a sequence in $\widehat{W}^{1,2}_{0,\sigma}(\Omega)$.

In the case of the coupled system \eqref{eqn1:NSD}, when seeking a weak solution, an appropriate extension of $v_*$ is crucial to control several integrals involving nonlinear terms with both the unknown velocity of the fluid and the unknown velocity of the rigid body. More precisely, in Section \ref{WFFS} (see the discussion on the nonlinear term in \eqref{expressN}) and in Section \ref{CWSG}, it will become evident that we need an estimate of the nonlinear terms in the form 
$$
2 \,  \mathrm{Re}  \left|  \int_{\Omega_R} (u - V) \cdot {\mathbb W}(u) \cdot \widetilde{v_*} \dx  \right| \leq 
\left(\gamma +  C(\Omega) \,  \mathrm{Re} \, | \Phi  | \right) \| {\mathbb D}(u) \|_{2,\Omega_R}^2 , \quad  \forall u \in {\mathcal V}_R,
$$
where $\displaystyle {\mathbb W} (v) := \left( \nabla v - (\nabla v)^\top \right) /2$ is the vorticity tensor, and the domains $\Omega_R$ and the spaces ${\mathcal V}_R$ will be introduced in Section \ref{WFFS}, when we define the functional framework for the problem \eqref{eqn1:NSD}. This framework was proposed in \cite{GS2002} when implementing a generalization of the invading domains technique that was developed in \cite{Hey} for the classical exterior Navier-Stokes equations.

In Section \ref{WFFS}, we recall the weak formulation of system \eqref{eqn1:NSD} and the functions spaces used to solve the problem. The extension of  $v_*$ to the exterior domain $\Omega$ is provided in Section \ref{sec3} (see Lemma \ref{lemaextD}). In Section \ref{CWSG}, we prove the following existence result:
\begin{theorem}
Suppose $\partial \Omega$ is locally Lipschitz continuous. Let $v_{*} \in W^{\frac{1}{2},2}(\partial \Omega)^3$ with flux \eqref{defflu} through $\partial \Omega$ satisfying: 
$$
 \mathrm{Re} \, C_0(\Omega) | \Phi  | < 1,
$$
for a specific positive constant $C_0(\Omega)$ depending only on the domain $\Omega$. Then problem \eqref{eqn1:NSD} admits at least one weak solution. If $\| v_{*} \|_{1/2,2,\partial \Omega} \leq M$, for some $M>0$, and $$
 \mathrm{Re} \, C_0(\Omega) | \Phi  | \leq 1/2,
$$
then 
\[
|\xi|+|\omega|+\|\nabla v \|_{2,\Omega} \leq C(M,\Omega)  \, \mathrm{Re} \, ( \|v_*\|_{1/2,2,\partial \Omega} + \|v_*\|^2_{1/2,2, \partial \Omega} ). 
\]
\label{maim}
\end{theorem}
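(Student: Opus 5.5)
The proof will follow the invading domains scheme of \cite{GS2002}, with the solenoidal extension of Lemma \ref{lemaextD} in place of the small, zero-flux extension used in \cite{Galdi1999}. Write $v = u + \widetilde{v_*}$, where $\widetilde{v_*}$ is the extension of Lemma \ref{lemaextD}. We assume this extension satisfies the Hopf-type bound
$$
2\,\mathrm{Re}\Big|\int_{\Omega_R}(u-V)\cdot{\mathbb W}(u)\cdot\widetilde{v_*}\dx\Big|\le(\gamma+C(\Omega)\,\mathrm{Re}\,|\Phi|)\|{\mathbb D}(u)\|_{2,\Omega_R}^2
$$
for all $u\in{\mathcal V}_R$, with $\gamma>0$ arbitrary. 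The new unknown $u$ then lives in the space ${\mathcal V}_R$ of fields that are rigid, $u=V=\xi+\omega\times x$, on the body and solenoidal in $\Omega_R$. In this space the body equations are absorbed into the weak formulation through the test functions $\varphi=\Phi_\varphi+\varpi\times x$ on $\mathcal S$, so the rigid-body data enter only through the mass and inertia terms.

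\textbf{Step 1 (bounded domains).} On each $\Omega_R$ we solve the Galerkin problem in ${\mathcal V}_R$, following Sohr's framework \cite{Sohr2012}, via the Brouwer fixed-point lemma (or Leray--Schauder). The a priori estimate comes from testing with $u$ itself.

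In this test the convective term $\int(u-V)\cdot\nabla u\cdot u$ is rewritten using the identity $a\cdot\nabla a = 2{\mathbb W}(a)a + \nabla|a|^2/2$. It then vanishes by $\nabla\cdot u=0$, the rigid boundary values, and the cancellation of $\omega\times u\cdot u$. The contributions $\mathrm{Re}\, m\,\omega\times\xi\cdot\xi$ and $\omega\times J\omega\cdot\omega$ vanish for the same reason. The terms that are linear in $u$ and involve $\widetilde{v_*}$, such as $\int{\mathbb D}(\widetilde{v_*}):{\mathbb D}(u)$ and $\int \widetilde{v_*}\cdot\nabla\widetilde{v_*}\cdot u$, are bounded by $C\,\mathrm{Re}(\|v_*\|_{1/2,2}+\|v_*\|_{1/2,2}^2)\|{\mathbb D}(u)\|_2$. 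Here we use Korn's inequality on ${\mathcal V}_R$ and the bound $|\xi|+|\omega|\le C\|{\mathbb D}(u)\|_{2}$, which holds because a rigid field with zero deformation that decays (or lies in $L^6$) must vanish. The only dangerous term is the trilinear one, and the displayed bound controls it. Choosing $\gamma=1/4$ and $C_0=2C(\Omega)$, the condition $\mathrm{Re}\,C_0|\Phi|<1$ makes the coefficient strictly less than $1$, so it can be absorbed. This gives a bound on $\|{\mathbb D}(u_R)\|_{2,\Omega_R}$ that is uniform in $R$. When $\mathrm{Re}\,C_0|\Phi|\le1/2$, the absorbed coefficient is at most $3/4$, and the constants depend only on $M$ through the norm of the extension, which yields the quantitative estimate.

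\textbf{Step 2 (limit $R\to\infty$).} Extend $u_R$ by zero outside $\Omega_R$. Through the Korn and Sobolev inequalities, the family is bounded in $\widehat W^{1,2}$ with rigid values on $\mathcal S$, and the pairs $(\xi_R,\omega_R)$ are bounded. We extract a subsequence with $u_R\rightharpoonup u$ weakly, $u_R\to u$ strongly in $L^2_{loc}$ by Rellich, and $(\xi_R,\omega_R)\to(\xi,\omega)$. Fix a compactly supported test function; it belongs to ${\mathcal V}_R$ for all large $R$. Passing to the limit, the linear terms converge weakly. The nonlinear terms converge because of local strong convergence together with the bounded support, the term $V\cdot\nabla u$ being handled in the same way. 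This gives a weak solution of \eqref{eqn1:NSD}. The estimate survives the limit by weak lower semicontinuity, and $\|\nabla v\|_2\le\|\nabla u\|_2+\|\nabla\widetilde{v_*}\|_2$ completes it.

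\textbf{Main obstacle.} The crux is Step 1's trilinear bound, in particular its $V$ part, $\int V\cdot{\mathbb W}(u)\cdot\widetilde{v_*}$, where $V$ grows linearly in $|x|$. This requires $\widetilde{v_*}$ to have a structure ensuring the integral stays controlled on large $\Omega_R$; we expect $\widetilde{v_*}$ to be a flux carrier of the form $\Phi\nabla(1/4\pi|x|)$ plus a compactly supported Hopf-cutoff part. We plan to rely on Lemma \ref{lemaextD} exactly as stated. If that lemma only gives the $u\cdot\nabla u$ version, we would handle the $V$ terms separately. For the compactly supported part, we would bound them by $|\xi|+|\omega|\le C\|{\mathbb D}(u)\|_2$ together with the Hopf cutoff's $\gamma$-smallness. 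For the flux carrier, we would use the decay $|x|^{-2}$ and Hardy's inequality, which produces the $C\,\mathrm{Re}|\Phi|$ contribution.
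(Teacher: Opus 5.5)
Your proposal is essentially the paper's proof. It uses the extension of Lemma~\ref{lemaextD} and the energy identity obtained by testing with $u$: the convective, $\omega\times u$ and rigid-body terms drop out, and the three mixed terms collapse by \eqref{expressN} to $2\int_{\Omega_R}(u-V)\cdot{\mathbb W}(u)\cdot\widetilde{v_*}\dx$, which is absorbed once $\gamma+C\,\mathrm{Re}\,|\Phi|<1$. It then takes a fixed-point solution on each $\Omega_R$ with $R$-uniform bounds and passes to the invading-domains limit, exactly as the paper does.

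Two side remarks. First, the fallback in your last paragraph is unnecessary and would not work as described. Separating $u$ from $V$ loses the vanishing on $\partial\Omega$ that the Hardy step for the cut-off term $\nabla\psi_\varepsilon\times w$ requires. Moreover, the linear growth of $\omega\times x$ against $\sigma$ is neutralized by the structural identity \eqref{divsig}, not by decay plus Hardy. Second, exactly as in the paper, $2\int_{\Omega_R}{\mathbb D}(\widetilde{v_*}):{\mathbb D}(u)\dx$ and $\|\nabla\widetilde{v_*}\|_{2,\Omega}$ carry no factor $\mathrm{Re}$. What the argument really gives is therefore $C(\|v_*\|_{1/2,2,\partial\Omega}+\mathrm{Re}\,\|v_*\|^2_{1/2,2,\partial\Omega})$ rather than $C\,\mathrm{Re}(\cdots)$.
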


We emphasize that, in \cite[Theorem 5.1]{Galdi1999}, the assumptions on the thrust velocity are: $\Phi=0$ and sufficiently small $\| v_* \|_{1/2,2,\partial \Omega}$. Here, Theorem \ref{maim} is established for a sufficiently small flux $|\Phi|$.

\section{Functional setting and Weak formulation}
\label{WFFS}

We begin by introducing the notation for the function spaces used throughout the paper. For $q \in [1,\infty]$ and a domain ${\mathcal D} \subseteq {\mathbb R}^3$, $L^q({\mathcal D})$ denotes classical Lebesgue spaces with norms $\| \cdot \|_{q,{\mathcal D}}$. For $m \in {\mathbb N}$, we consider the Sobolev spaces 
$W^{m,2}({\mathcal D})$ 
with norms $\| \cdot \|_{m,2,{\mathcal D}}$. By $W^{m-\frac{1}{2},2}(\partial{\mathcal D})$ we indicate the trace space for $W^{m,2}(\mathcal D)$-functions on the (sufficiently
smooth) boundary $\partial{\mathcal D}$ of ${\mathcal D}$, equipped with the norm $\| . \|_{m-\frac{1}{2},2,\partial{\mathcal D}}.$

In our context, we assume that $\partial \Omega$ is locally Lipschitz. We know that, by the Hardy inequality, which controls the weighted decay relative to the distance from the origin, any element $u$ of $\widehat{W}^{1,2}_{0,\sigma}(\Omega)$ satisfies
$$
\int_\Omega  \frac{|u(x)|^2}{(1+|x|)^2} \dx \leq 4 \| \nabla u \|_{2,\Omega}^2
$$
and therefore
$$
\widehat{W}^{1,2}_{0,\sigma}(\Omega) \hookrightarrow {\mathcal W}:= \left\{ u\in W^{1,2}_{loc}(\overline{\Omega})^3 \mid \frac{u}{1+|x|} \in L^2(\Omega)^3,\, \nabla u \in L^2(\Omega)^{3\times 3}, \,  \nabla \cdot  u =0 \textrm{ in } \Omega \right\}.
$$

Now, we introduce the set of rigid velocity fields,
\begin{equation*}
\mathcal{R}=\left\{ V : \mathbb{R}^3 \to \mathbb{R}^3 \mid \exists\ \xi, \omega \in \mathbb{R}^3 \text{ such that }V(x)=\xi+ \omega \times x\mbox{ for any } x\in\mathbb{R}^3\right\},
\end{equation*}
and the set of test functions to be used in the weak formulation of \eqref{eqn1:NSD},
    \[
{\mathcal T} = \{ \varphi \in C^\infty_{0,\sigma}(\overline{\Omega})^3 \mid \,  \exists \ \varphi_{\mathcal S}\in \mathcal{R} \text{ such that } \varphi  = \varphi_{\mathcal S} \text{ in a neighborhood of  } \partial \Omega   \}
\]
where $\varphi_{\mathcal S}(x) := a_\varphi + b_\varphi \times x$, for some $a_\varphi,b_\varphi  \in {\mathbb R}^3$. The completion of ${\mathcal T}$ with respect to the norm $u \mapsto \sqrt{2} \| {\mathbb D}(u)\|_{2,\Omega}$ is the space (see \cite{Serre} and \cite{Galdi1999}):
\begin{multline*}
\displaystyle {\mathcal V} = \left\{ u\in W^{1,2}_{loc}(\overline{\Omega})^3 \mid \frac{u}{1+|x|} \in L^2(\Omega)^3,\, \nabla u \in L^2(\Omega)^{3\times 3}, \,  \nabla \cdot  u =0 \textrm{ in } \Omega, \right. \\ \left. \exists  \, u_{\mathcal S}  \in {\mathcal R} \mbox{ s. t. } (u - u_{\mathcal S})|_{\partial \Omega}= 0 \right\}.
\end{multline*}
Here and in what follows, $|_{\partial \Omega}$ denotes the trace on $\partial \Omega$. By Korn inequality, we have 
\begin{equation}
\| \nabla u \|_{2,\Omega} \leq \sqrt{2} \| \mathbb{D}(u) \|_{2,\Omega} \, , \quad \forall\ u \in {\mathcal V},
\label{kor}
\end{equation}
and, from \cite{We73}, there exists a positive constant $C(\partial \Omega)$ such that
\begin{equation}
|a_u| + |b_u| \leq C(\partial \Omega) \| \mathbb{D}(u) \|_{2,\Omega}, \quad \forall\ u \in {\mathcal V},
\label{abd}
\end{equation}
where $u_{\mathcal S}(x) := a_u + b_u \times x$. By Sobolev inequality, combined with \eqref{kor}, 
\begin{equation}
\| u \|_{6,\Omega} \leq 2 \sqrt{\frac{2}{3}} \| \mathbb{D}(u) \|_{2,\Omega}, \quad \forall\ u \in {\mathcal V}.
\label{n6d}
\end{equation}

  \begin{definition}
    A pair $(v,V) \in {\mathcal W} \times \mathcal{R}$, with $V(x)=\xi + \omega \times x$, is a weak solution in the velocity component of the system \eqref{eqn1:NSD} if
\[
\begin{aligned}
2\int_{\Omega} {\mathbb D}({v}): {\mathbb D}({\varphi}) \dx 
=  & \,  {\mathrm{Re}} \int_{\Omega}(v-V) \cdot \nabla \varphi \cdot v \dx - {\mathrm{Re}} \int_{\Omega} ( \omega \times v ) \cdot \varphi \dx \\
 & + {\mathrm{Re}} \, m \, \xi\times\omega \cdot a_\varphi  + {\mathrm{Re}}\, (J \omega) \times\omega \cdot b_\varphi, \quad \forall\  \varphi \in {\mathcal T}, 
 \end{aligned}
 \]
 and  $$(v-V)|_{\partial \Omega}  = v_* \, .$$
   \end{definition}  Following the usual ansatz $v = u + \widetilde{v_*}$, where $\widetilde{v_*}$ is a suitable solenoidal extension of $v_*$, the weak formulation for the velocity components $(u,V) \in {\mathcal V} \times {\mathcal R}$ can be rewritten as:
\begin{equation}
\begin{aligned}
 & 2\int_{\Omega} {\mathbb D}({u}): {\mathbb D}({\varphi})\dx 
 =   \mathrm{Re} \int_{\Omega}(u-V) \cdot \nabla \varphi \cdot u \dx -  \mathrm{Re}  \int_{\Omega} ( \omega \times u ) \cdot \varphi \dx \\
 & + \mathrm{Re} \, m \, \xi\times\omega \cdot a_\varphi  + \mathrm{Re} \, (J\omega)\times\omega \cdot b_\varphi +  \mathrm{Re} \int_{\Omega} \widetilde{v_*} \cdot \nabla \varphi \cdot u \dx+ \mathrm{Re} \int_{\Omega}(u-V) \cdot \nabla \varphi \cdot \widetilde{v_*} \dx \\
 & - \mathrm{Re} \int_{\Omega} ( \omega \times \widetilde{v_*}) \cdot \varphi \dx 
  + \mathrm{Re} \int_{\Omega} \widetilde{v_*} \cdot \nabla \varphi \cdot \widetilde{v_*} \dx - 2\int_{\Omega} {\mathbb D}(\widetilde{v_*}): {\mathbb D}({\varphi})\dx , \quad \forall\  \varphi \in {\mathcal T}.
\end{aligned}
\label{wform}
\end{equation}
Note that the relation between $u$ and $V(x)= \xi + \omega \times x$ is 
\begin{equation}
 (u-V)|_{\partial \Omega} = (u - u_{\mathcal S})|_{\partial \Omega} = 0,
 \label{uVuS}
 \end{equation}  
which is automatically fulfilled in the space ${\mathcal V}$. To simplify the notation, we will henceforth use 
$u_{\mathcal S}$ and $V$ interchangeably.

To address the weak formulation \eqref{wform} for the unknown $(u,\xi,\omega)$, or simply $u$, if implicitly we assume that $u$ and $V$ are interrelated by \eqref{uVuS}, we first introduce some additional notation. As usual, $B_R=\{ x \in {\mathbb R}^3 \mid |x| <R \}$ stands for the Euclidean ball of radius $R>0$. We will also use the notation $B^R:=\{ x \in {\mathbb R}^3 \mid |x| > R \}$. We denote by $u|_{\partial B_R}$ the trace of $u \in W^{1,2}(\Omega_R)^3$ on $\partial B_R$. Denoting by $\delta(\mathcal{S})$, the diameter of $\mathcal{S}$, for $R>\delta(\mathcal{S})$, we define the bounded domains $\Omega_R := \Omega \cap B_R$, and the function spaces 
\begin{equation*}
{\mathcal V}_R=  \left\{ u \in W^{1,2}(\Omega_R)^3 \mid \nabla \cdot  u =0 \textrm{ in }\Omega_R, \, \exists \ u_{\mathcal S}  \in {\mathcal R} \mbox{ s. t. } (u - u_{\mathcal S})|_{\partial\Omega}=0, \text{ and } u|_{\partial B_R}= 0  \right\},
\end{equation*}
equipped with inner product  and norm
$$(u,v)_{{\mathcal V}_R} := 2 \int_{\Omega_R} {\mathbb D}(u):{\mathbb D}(v) \dx , \qquad \|u\|_{\mc{V}_R} := \sqrt{2}\| {\mathbb D}(u)\|_{2,\Omega_R}.$$ Extending each element $u \in {\mathcal V}_R$ by zero to the exterior domain $\Omega$, we obtain ${\mathcal V}_R \subset {\mathcal V}$ and the continuous embeddings
$$
{\mathcal V}_R  \hookrightarrow  {\mathcal V}_{R'} \hookrightarrow {\mathcal V} \hookrightarrow  L^6(\Omega)^3 \qquad (R' > R).
$$

We also define the Hilbert spaces 
\begin{multline*}
    {\mathcal{H}}_R = \Big\{ u \in L^2(\Omega_R) \mid \nabla \cdot  u =0 \textrm{ in }\Omega,  \exists \, u_{\mathcal S} \in {\mathcal R} \mbox{ such that }
  {(u - u_{\mathcal S})\cdot n}_{|\partial \Omega} = 0, \\ \text{ and } u\cdot n|_{\partial B_R}= 0\Big\}
\end{multline*}
with the scalar product 
$$ 
(u,v)_{{\mathcal{H}}_R}  :=  \int_{\Omega} u \cdot v \dx + m a_u \cdot a_v + b_u \cdot J  b_v \, .
$$ 
The induced norm is 
\begin{equation}
\| u  \|_{{\mathcal{H}}_R} :=  \left( \| u \|_{2,\Omega_R}^2 + m |a_u|^2 + b_u \cdot J b_u \right)^{\frac 12}.
\label{normHR}
\end{equation}
Then the following embeddings hold
\begin{equation}
{\mathcal V}_R  \hookrightarrow  \hookrightarrow  {\mathcal H}_{R} \hookrightarrow {\mathcal V}'_R 
\label{compin}
\end{equation}
where the first embedding is compact and the second is continuous.

The Stokes operator plays a fundamental role in the functional analytic treatment of the classical Navier--Stokes equations, as developed, for instance, by Hermann Sohr in \cite{Sohr2012}. To carry out our analysis within a similar functional analytic framework, we now introduce a generalization of the Stokes operator adapted to our problem on a bounded domain. Specifically, we consider the following system
\begin{equation}
\left\{
\begin{aligned}
- \nabla \cdot  {\mathbb T}(u,p) & =  f   && \text{ in }\Omega_R  \\ 
\nabla \cdot u & =0  && \text{ in } \Omega_R  \\  
u & = V  && \text{ on }\partial \Omega \\ 
u  & = 0 && \text{ on } \partial B_R   \\ 
 \int_{\partial \Omega}  {\mathbb T}(u,p) n \dS  & =  m \, a_f  &&  \\ 
 \int_{\partial \Omega} x\times  {\mathbb T}(u,p) n \dS  & =   J b_f . &&
\end{aligned}
\label{systoke}
\right.
\end{equation}
Suppose $f \in {\mathcal{H}}_R$ with $f_{\mathcal S}:= a_f  +  b_f \times x$. Then the weak formulation of \eqref{systoke} in ${\mathcal{V}}_R$  is 
\[
\begin{aligned}
2 \int_{\Omega_R} {\mathbb D}(u):{\mathbb D}(v) \dx  &  =   \int_{\Omega}  f \cdot v \dx + m \, a_f  \cdot a_v + b_f \cdot (J b_v), \quad \forall v \in {\mathcal{V}}_R \\
\iff \quad  (u,v)_{{\mathcal{V}}_R}  & =  (f,v)_{{\mathcal{H}}_R}, \quad \forall  v \in {\mathcal{V}}_R .
\end{aligned}
\]

We now adapt the arguments of \cite[Chapter~III, Section~2]{Sohr2012}, to our functional setting. Let  $D(A) \subseteq {\mathcal{V}}_R$  be the space of all those $u \in {\mathcal{V}}_R$ for which there exists some $f \in {\mathcal{H}}_R$  satisfying
\begin{equation}
 (u,v)_{{\mathcal{V}}_R}  =  (f,v)_{{\mathcal{H}}_R}, \quad \forall\ v \in {\mathcal{V}}_R.
\label{rie}
\end{equation}
By the Riesz representation theorem,  $D(A)$ is the space of all those $u \in {\mathcal{V}}_R$  such that the functional
$$
{\mathcal{V}}_R \ni v  \longmapsto 2 \int_{\Omega_R} {\mathbb D}(u):{\mathbb D}(v) \dx
$$
is continuous in the norm \eqref{normHR}. For all $u \in D(A)$, let $Au \in {\mathcal{H}}_R$ be defined by the relation
\begin{equation}
(Au,v)_{{\mathcal{H}}_R} = (u,v)_{{\mathcal{V}}_R}  , \quad \forall v \in {\mathcal{V}}_R  
\label{defA}
\end{equation}
$$
\iff \int_{\Omega_R}  Au  \cdot v \dx + m a_{Au}  \cdot a_v + b_{Au} \cdot J  b_v = 2 \int_{\Omega_R} {\mathbb D}(u):{\mathbb D}(v) \dx , \quad \forall v \in {\mathcal{V}}_R.
$$
Thus $Au = f$ as in \eqref{rie} and we call $A_R = A$ the generalized Stokes operator for the domain $\Omega_R$. 

If $\partial \Omega$ is of class $C^2$, then, for every $u \in D(A)$, there exists $p \in W^{1,2}(\Omega_R)$ such that $Au \in {\mathcal{H}}_R$ is characterized by (see \cite{GS2002} for details):  
$$
\begin{cases}  \displaystyle Au  = - \Delta u + \nabla p  \text{ in } \Omega_R ,\\   \displaystyle  ( Au )_{\mathcal S} =  \frac{1}{m} \int_{\partial \Omega}  (2 {\mathbb D}(u) - p {\mathbb I} )n \dS  + \left( J^{-1} \int_{\partial \Omega}  y \times ( 2 {\mathbb D}(u) - p {\mathbb I} ) n \dS \right) \times x . \end{cases}
$$

Let 
$$
{\mathcal{U}}_R = \left\{ \varphi \in W^{1,2}(\Omega_R)^3 \mid  \exists \ \varphi_{\mathcal S}  \in {\mathcal R} \mbox{ s. t. } (\varphi - \varphi_{\mathcal S})|_{\partial\Omega}=0, \text{ and } \varphi|_{\partial B_R}= 0  \right\}.
$$
If $\partial \Omega$ is Lipschitz, then we can still recover the pressure from \eqref{rie}. Since $C^\infty_{0,\sigma}(\Omega_R)^3 \subset {\mathcal{V}}_R$ we obtain
$$
2 \int_{\Omega_R} {\mathbb D}(u):{\mathbb D}(v) \dx   =   \int_{\Omega}  f \cdot v \dx, \quad \forall v \in C^\infty_{0,\sigma}(\Omega_R)^3.
$$
It follows that there exists $p \in L^2(\Omega_R)$ such that 
\begin{equation}
- \nabla \cdot {\mathbb T}(u,p) = f \text{ in } \Omega_R
\label{clm}
\end{equation}
in the sense of of distributions. Since $f \in L^2(\Omega_R)^3$ and ${\mathbb T}(u,p) \in L^2(\Omega_R)^{3 \times 3}$, we can test \eqref{clm} with $\varphi \in {\mathcal{U}}_R$, which yields
$$
\langle  {\mathbb T}(u,p)n ,  \varphi_{\mathcal S} \rangle_{W^{-1/2,2}(\partial \Omega)} = \int_{\Omega_R} \left( 2 {\mathbb D}(u) - p \mathbb I \right) : \nabla \varphi \dx -  \int_{\Omega_R }f \cdot  \varphi \dx.
$$
Now choosing $\varphi  \in {\mathcal{V}}_R$, with $\varphi_{\mathcal S} = e_i$, $i=1,2,3$, and comparing with \eqref{rie} we conclude that 
$$
\langle  {\mathbb T}(u,p)n , 1 \rangle_{W^{-1/2,2}(\partial \Omega)\times W^{1/2,2} (\partial \Omega)}  = m a_f.
$$
In a similar way, choosing $\varphi_{\mathcal S} = e_i \times x$, $i=1,2,3$ yields
$$
\langle x \times  {\mathbb T}(u,p)n , 1 \rangle_{W^{-1/2,2}(\partial \Omega) \times W^{1/2,2} (\partial \Omega) }  = J b_f.
$$

These results can be summarized in the following way.
\begin{lemma}
Let $u \in {\mathcal{V}}_R$ and $f \in {\mathcal{H}}_R$. Then $u$ is a weak solution of the Stokes
system \eqref{systoke} if and only if $u \in D(A)$ and $Au = f$ 
and this holds  if and only if there exists a unique $p \in L^2(\Omega_R)$ satisfying \eqref{systoke} in a generalized sense.
\end{lemma}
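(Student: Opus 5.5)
The equivalence between the weak Stokes problem and the operator equation is immediate from the definitions; the real content is the construction and uniqueness of the pressure. I would organize the argument as a chain of three equivalences.

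\emph{Weak solution $\iff$ $u\in D(A)$, $Au=f$.} By the weak formulation stated after \eqref{systoke}, $u$ is a weak solution exactly when $(u,v)_{{\mathcal V}_R}=(f,v)_{{\mathcal H}_R}$ for all $v\in{\mathcal V}_R$. This is \eqref{rie}, so $u\in D(A)$. Comparing with \eqref{defA} gives $(Au-f,v)_{{\mathcal H}_R}=0$ for all $v\in{\mathcal V}_R$. Since ${\mathcal V}_R$ is dense in ${\mathcal H}_R$, we get $Au=f$. Density is standard for this bounded-domain space: one can approximate the fluid part and the rigid part separately, using the compact/continuous embeddings \eqref{compin}. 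The converse reads the same identities backwards.

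\emph{$Au=f$ $\Rightarrow$ pressure.} Testing with $v\in C^\infty_{0,\sigma}(\Omega_R)$ gives
\[
\langle -\nabla\cdot 2{\mathbb D}(u)-f,v\rangle=0 .
\]
By de Rham's theorem, in the version with Ne\v cas' inequality on the bounded Lipschitz domain $\Omega_R$, there is a $p\in L^2(\Omega_R)$ satisfying \eqref{clm}. This $p$ is unique up to an additive constant. Next, the generalized normal stress ${\mathbb T}(u,p)n$ is defined in $W^{-1/2,2}(\partial\Omega)$ through Green's formula, testing with $\varphi\in{\mathcal U}_R$. I would then pick $\varphi\in{\mathcal V}_R$ with $\varphi_{\mathcal S}=e_i$ or $e_i\times x$; such $\varphi$ exist via a Bogovskii-corrected cutoff of the rigid field near $\partial\Omega$. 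Because $\nabla\cdot\varphi=0$, the pressure term drops out. Comparing with \eqref{rie} then yields the force and torque balances of \eqref{systoke} in the generalized sense. Together these recover the whole system \eqref{systoke}.

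\emph{Pressure $\Rightarrow$ $Au=f$, and fixing $p$.} Conversely, suppose $(u,p)$ satisfies \eqref{systoke} generally. Every $v\in{\mathcal V}_R$ is in ${\mathcal U}_R$, so Green's formula applies with the pressure term $\int p\,\nabla\cdot v=0$. The boundary terms become $m a_f\cdot a_v+Jb_f\cdot b_v$, because $v=a_v+b_v\times x$ on $\partial\Omega$ and $v=0$ on $\partial B_R$. This reproduces \eqref{rie}.

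The additive constant in $p$ is not free. A constant $c$ contributes $-c\int_{\partial\Omega}n\,\dS=0$ to the force and $-c\int_{\partial\Omega}x\times n\,\dS=0$ to the torque, so the balance equations cannot pin it down. Instead, the constant is fixed by the normalization hidden in the generalized sense of the Dirichlet trace on $\partial B_R$. I would impose $\int_{\Omega_R}p\,\dx=0$ explicitly to make "unique" precise.

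The main obstacle is building the test fields $\varphi\in{\mathcal V}_R$ with prescribed rigid trace on a merely Lipschitz $\partial\Omega$, together with the density claim. I would handle both with a cutoff plus the Bogovskii operator on an annular Lipschitz region.
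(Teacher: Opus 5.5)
Your argument follows the paper's own route. You recover $p\in L^2(\Omega_R)$ by de Rham after testing with $C^\infty_{0,\sigma}(\Omega_R)$, define ${\mathbb T}(u,p)n$ on $\partial\Omega$ through Green's formula with $\varphi\in{\mathcal U}_R$, and read off the force and torque balances by choosing $\varphi\in{\mathcal V}_R$ with $\varphi_{\mathcal S}=e_i$ and $e_i\times x$. You also make explicit three things the paper leaves tacit: the converse direction, the existence of such test fields, and the density of ${\mathcal V}_R$ in ${\mathcal H}_R$, which the paper needs when it writes $Au=f$.

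Your remark that $p$ is determined only up to a constant is correct, and it exposes an imprecision in the statement. However, drop the sentence saying the constant is fixed by the Dirichlet trace on $\partial B_R$. Nothing in \eqref{systoke} fixes the constant, so uniqueness holds only after a normalization such as $\int_{\Omega_R}p\dx=0$, which is what you then impose.

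Also, \eqref{compin} cannot be used to prove density, because injectivity of ${\mathcal H}_R\hookrightarrow{\mathcal V}_R'$ is equivalent to density. A clean form of your \emph{separate} approximation is as follows. Subtract a fixed $\varphi\in{\mathcal V}_R$ with $\varphi_{\mathcal S}=u_{\mathcal S}$. Then $u-\varphi$ is solenoidal with zero normal trace on $\partial\Omega_R$, so it lies in the $L^2$-closure of $C^\infty_{0,\sigma}(\Omega_R)$, and approximating it there gives elements of ${\mathcal V}_R$ converging to $u$ in ${\mathcal H}_R$.
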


In each bounded domain $\Omega_R$, from \eqref{defA} we have
$$
u \in {\mathcal V}_R \text{ and }Au = 0 \iff u= 0
$$
and therefore, $N(A)=\{0\}$. Consequently,  $A^{-1}$ is a bounded operator with
$$
D(A^{-1}) = R(A) = {\mathcal H}_R.
$$

The square root of the Stokes operator and its inverse play an
important role in the theory of weak solutions of the Navier-Stokes system, as shown in \cite{Sohr2012}. In our case, for the generalized Stokes operator defined by \eqref{defA}, the operator $A^{\frac 12}$ has the following properties:
$$
D(A^{\frac 12}) = {\mathcal{V}}_R, \qquad  R(A^{\frac 12}) = {\mathcal{H}}_R, 
$$
\begin{equation}
 (A^{\frac 12} u , A^{\frac 12} v)_{{\mathcal{H}}_R} = (u,v)_{{\mathcal{V}}_R} = 2 \int_{\Omega_R} {\mathbb D}(u):{\mathbb D}(v) \dx \qquad (u,v \in {\mathcal{V}}_R) . 
 \label{a12}
\end{equation}
In particular, it follows from \eqref{a12} that
\begin{equation}
\| A^{\frac 12} u \|_{{\mathcal{H}}_R} = \|u \|_{{\mathcal{V}}_R} \qquad (u \in {\mathcal{V}}_R).
\label{na12}
\end{equation}
Moreover 
$$
D(A^{-1/2}) = R(A^{1/2}) = {\mathcal H}_R
$$
and $A^{-1/2}$ is a bounded operator. Actually, $A^{-1/2}$ is a compact operator, as a consequence of 
\begin{equation}\label{A-12}
\| U \|_{{\mathcal{H}}_R} = \| A^{-\frac 12} U \|_{{\mathcal{V}}_R} \qquad (U \in {\mathcal{H}}_R)
\end{equation}
together with the compact embedding in \eqref{compin}.

In Section \ref{CWSG}, a weak solution will be constructed using the invading domains technique  \cite{Hey,GS2002} and a procedure similar to the one used in \cite[pg. 168]{Sohr2012} for the classical homogeneous exterior Navier-Stokes problem. Here, a  key step is the construction of an appropriate solenoidal extension of $v_*$. Then, for each  $R >  \delta({\mathcal S})$, an approximate solution $(u_R,\xi_R,\omega_R) \in {\mathcal V}_R \times {\mathbb R}^3 \times {\mathbb R}^3$ will be obtained in the bounded domain $\Omega_R$ using the Leray-Schauder principle. A solution $(u,V) \in {\mathcal V} \times {\mathcal R}$ is obtained upon taking the limit as $R \to \infty$ by virtue of uniform estimates with respect to $R$. 

It should be emphasized that the main difficulty inherent to problem \eqref{wform} when restricted to a bounded domain $\Omega_R$ consists precisely in deriving a uniform bound in $R$ for the integrals containing the nonlinear terms, specifically,
\[
N:=  \int_{\Omega_R} \widetilde{v_*} \cdot \nabla u_R \cdot u_R \dx + \int_{\Omega_R}(u_R-V_R) \cdot \nabla u_R \cdot \widetilde{v_*} \dx  - \int_{\Omega_R} ( \omega_R \times \widetilde{v_*}) \cdot u_R \dx.
\]
Using the relation 
\[ \omega_R \times \widetilde{v_*} = \widetilde{v_*} \cdot \nabla V_R,
\]
and integration by parts together with the properties of the trace $u_{R}|_{\partial \Omega_R}$, namely
$$
u_{R}|_{\partial \Omega} = V_R, \quad u_{R}|_{\partial B_R} = 0,
$$
we can write a simplified expression for $N$:
\begin{align}\label{expressN}
N = & \int_{\Omega_R} \widetilde{v_*} \cdot \nabla (u_R - V_R) \cdot u_R \dx + \int_{\Omega_R}(u_R-V_R) \cdot \nabla u_R \cdot \widetilde{v_*} \dx \notag
\\
= & - \int_{\Omega_R} \widetilde{v_*} \cdot \nabla u_R \cdot (u_R - V_R) \dx + \int_{\Omega_R}(u_R-V_R) \cdot \nabla u_R \cdot \widetilde{v_*} \dx \notag\\
= & \,  2 \int_{\Omega_R}(u_R-V_R) \cdot {\mathbb W} (u_R) \cdot \widetilde{v_*} \dx.
\end{align}
Note that the structure of this integral is considerably different from that of the integral in \eqref{esttrinonlcl}. Here, the difficulty is to handle the traces of $u_R -V_R$ on $\partial \Omega_R$. While  $(u_R-V_R)|_{\partial \Omega}=0$, we have $(u_R-V_R)|_{\partial B_R}\not=0$, which prevents directly applying the Hardy inequality near the boundary $\partial B_R$ as done in the classical Dirichlet problem for the Navier-Stokes equations (see \cite[Lemma X.4.2]{G}).

 In the next section, we construct a solenoidal extension $\widetilde{v_*}$ that allows for an appropriate bound on $R$ of the final integral used to express $N$. 

\section{Extension of the boundary values}\label{sec3}

 Recall the Laplace fundamental solution ${\mathcal E} (x) := \frac{1}{4 \pi |x|}$, a tempered distribution that satisfies $ -\Delta {\mathcal E}  = \delta_0$ in ${\mathbb R}^3$, where $\delta_a$ denotes the Dirac delta distribution with support at the point $a \in {\mathbb R}^3$. 
 
The improvement of the existence result in \cite[Theorem 5.1]{Galdi1999} stems from the possibility of constructing the following extension of $v_*$, which generalizes \cite[Lemma 5.4]{Galdi1999}.

\begin{lemma}
\label{lemaextD}
Let $\partial \Omega$ be locally Lipschitz and $v_{*} \in W^{\frac{1}{2},2}(\partial \Omega)^3$. Then for any $\gamma >0$,  there exists $\widetilde{v_*}  \in W^{1,2}( \Omega)^3$ satisfying 
\[
\begin{cases}
  \nabla \cdot \widetilde{v_*}  =  0  \text{ in } \Omega, \\
  \widetilde{v_*}  =  v_*  \text{ on } \partial \Omega.
\end{cases}
\]
Furthermore, we have the estimate
$$
2 \,  \mathrm{Re}  \left|  \int_{\Omega_R} (u - V) \cdot {\mathbb W}(u) \cdot \widetilde{v_*} \dx  \right| \leq 
\left(\gamma +  C(\Omega) \,  \mathrm{Re} \, | \Phi  | \right) \| u \|_{{\mathcal V}_R}^2 , \quad  \forall u \in {\mathcal V}_R,
$$
where $C(\Omega)$ is a positive constant that depends only on the domain $\Omega$. If $\| v_{*} \|_{1/2,2,\partial \Omega} \leq M$, for some $M>0$, then there is $C(\Omega,\gamma,M)>0$ such that
\[
\| \widetilde{v_{*}} \|_{1,2,\Omega} \leq C(\Omega,\gamma,M)\| v_{*} \|_{1/2,2,\partial \Omega}.
\]
\end{lemma}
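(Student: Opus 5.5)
Split $v_*$ into a flux-carrying part and a zero-flux part. Put the origin inside $\mathcal S$ (the center of mass, so $0\notin\overline{\Omega}$). Let $\sigma:=\Phi\,\nabla\mathcal E$, i.e. $\sigma(x)=-\frac{\Phi}{4\pi}\frac{x}{|x|^3}$. It is smooth, divergence free and harmonic in $\Omega$, and $\int_{\partial\Omega}\sigma\cdot n\dS=\Phi$. The sign is fixed by the divergence theorem on $\Omega_R$ together with the flux through $\partial B_R$, recalling that $n$ points into $\mathcal S$. Then $w_*:=v_*-\sigma|_{\partial\Omega}$ has zero flux and $\|w_*\|_{1/2,2,\partial\Omega}\le \|v_*\|_{1/2,2,\partial\Omega}+C(\Omega)|\Phi|\le C\|v_*\|_{1/2,2,\partial\Omega}$.

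For the zero-flux part I use the Hopf-type construction of \cite[Lemma 5.4]{Galdi1999} (cf. \cite[Lemma X.4.2]{G}), which needs only $\Phi=0$, not smallness. First take a solenoidal $W^{1,2}$ extension $w=\nabla\times a$ supported in a bounded neighbourhood of $\partial\Omega$. Then set $\widetilde{w_*}:=\nabla\times(\zeta_\varepsilon a)$, where $\zeta_\varepsilon$ is Hopf's cutoff: it equals $1$ near $\partial\Omega$, is supported in a layer of width $\sim\varepsilon$, and satisfies $|\nabla\zeta_\varepsilon(x)|\le \varepsilon/\mathrm{dist}(x,\partial\Omega)$. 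This gives $|\widetilde{w_*}(x)|\le C\varepsilon/\mathrm{dist}(x,\partial\Omega)+|\zeta_\varepsilon\nabla a|$ on a set shrinking to $\partial\Omega$. Define $\widetilde{v_*}:=\widetilde{w_*}+\Phi(1-\zeta)\nabla\mathcal E+\nabla\times(\cdot)$-correction. More simply, $\widetilde{v_*}:=\widetilde{w_*}+\sigma$, which already has the right trace and zero divergence. We have $\sigma\in W^{1,2}(\Omega)$ because $|\sigma|\sim|x|^{-2}$ and $|\nabla\sigma|\sim|x|^{-3}$. The bound $\|\widetilde{v_*}\|_{1,2}\le C(\Omega,\gamma,M)\|v_*\|_{1/2,2}$ follows since $\varepsilon$ is chosen depending on $\gamma$ and $M$, and the $\sigma$ part is linear in $\Phi$, with $|\Phi|\le C\|v_*\|_{1/2,2}$.

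For the trilinear estimate, write $I=2\int_{\Omega_R}(u-V)\cdot\mathbb W(u)\cdot\widetilde{v_*}\dx$ and split it into the $\widetilde{w_*}$ part and the $\sigma$ part.

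\emph{The $\widetilde{w_*}$ part.} It is supported near $\partial\Omega$ and hence away from $\partial B_R$ for $R$ large. There $u-V$ vanishes on $\partial\Omega$, so the Hardy inequality in the distance to $\partial\Omega$ gives $\|(u-V)/\mathrm{dist}\|_{2}\le C\|\nabla(u-V)\|_2$ on the layer. Here $\nabla V$ is bounded by $|b_u|$ via \eqref{abd}, so the whole quantity is $\le C\|u\|_{\mathcal V_R}$. The remainder is handled by $L^3$ smallness of $\zeta_\varepsilon\nabla a$ on a thin layer, together with \eqref{n6d} and \eqref{abd} for $V$ on a bounded set. This yields $\le \gamma\|u\|^2_{\mathcal V_R}/\mathrm{Re}$ after choosing $\varepsilon$ small.

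\emph{The $\sigma$ part: the main obstacle.} Here $u-V$ does not vanish on $\partial B_R$ and grows linearly through $\omega\times x$, while $\sigma$ decays only like $|x|^{-2}$. I would exploit the structure of $\sigma$. Write $(u-V)\cdot\mathbb W(u)\cdot\sigma$ and treat $u$ and $V$ separately. For $u$, I use $|\sigma|\le C|\Phi||x|^{-2}\le C|\Phi|(1+|x|)^{-1}\cdot C(\Omega)$ on $\Omega$, then Hardy (valid since $u$ vanishes on $\partial B_R$ and extends by zero), giving $\le C|\Phi|\|u\|^2_{\mathcal V_R}$. For the $V$ term, $\xi$ contributes $|\xi|\,\|\nabla u\|_2\|\sigma\|_2\le C|\Phi|\|u\|^2$ by \eqref{abd}, since $\sigma\in L^2(\Omega)$. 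The rotational term $(\omega\times x)\cdot\mathbb W(u)\cdot\sigma$ is the delicate one, because $|\omega\times x||\sigma|\sim|\omega||\Phi||x|^{-1}\notin L^2$. Here I use radiality: $\sigma\parallel x$, and $\mathbb W(u)\cdot\sigma$ paired with $\omega\times x$ reduces to $\frac{-\Phi}{8\pi|x|^3}\big[(\omega\times x)\cdot\nabla u\cdot x-x\cdot\nabla u\cdot(\omega\times x)\big]$. I would then integrate by parts, moving the derivative off $u$ onto the smooth weight. The boundary terms on $\partial B_R$ vanish because $u=0$ there, and those on $\partial\Omega$ involve only $V$ and are bounded by $|\omega|(|\xi|+|\omega|)|\Phi|$. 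The volume terms produce $|\omega||u||x|^{-2}$, which Hardy again controls by $|\omega|\|u\|_{\mathcal V_R}$. Every contribution is therefore $\le C(\Omega)|\Phi|\|u\|^2_{\mathcal V_R}$. Adding the two parts and multiplying by $\mathrm{Re}$ gives the claim. If the direct integration by parts leaves a non-decaying term, I would fall back on the identity $\mathbb W(u)\cdot\sigma=\tfrac12(\nabla\times u)\times\sigma$ and use $\nabla\times\sigma=0$ to transfer the curl.
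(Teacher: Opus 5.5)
\textbf{A gap in the rotational term.} Your overall architecture is the paper's: a flux carrier $\Phi\nabla\mathcal E(\cdot-x_0)$, a compactly supported Hopf-type extension of the zero-flux remainder, and an integration by parts for the $\sigma$-part that uses $u=0$ on $\partial B_R$, $u=V$ on $\partial\Omega$ and the radial structure of $\sigma$. Splitting $u-V$ into $u$, $\xi$ and $\omega\times x$, and bounding the first two directly via $\|u\sigma\|_2$ and $\sigma\in L^2(\Omega)$, is a legitimate variant.

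The gap is in the step you yourself single out as the main obstacle. After integrating by parts in the rotational term, you say the volume terms are of size $|\omega||u||x|^{-2}$ and that Hardy controls them by $|\omega|\,\|u\|_{\mathcal V_R}$. This is false uniformly in $R$. Hardy gives $\|u/|x|\|_2\le C\|u\|_{\mathcal V_R}$, but then $\int_{\Omega_R}|u||x|^{-2}\dx\le\|u/|x|\|_2\,\|\,|x|^{-1}\|_{2,\Omega_R}$, and $\|\,|x|^{-1}\|_{2,\Omega_R}\sim R^{1/2}$. The loss is genuine. Take an azimuthal field $u=g(|x|)\,e_3\times x$ supported in $R/2<|x|<R$ with $|u|\sim R^{-1/2}$. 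It lies in $\mathcal V_R$ with $\|u\|_{\mathcal V_R}\sim 1$, yet $\int_{\Omega_R}u\cdot(e_3\times x)\,|x|^{-3}\dx\sim R^{1/2}$. So a volume term of this type cannot be estimated; it has to disappear.

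\textbf{The missing idea: exact cancellation.} Integrate the two halves of $2\int_{\Omega_R}(\omega\times x)\cdot\mathbb W(u)\cdot\sigma\dx$ by parts, using $\nabla\cdot(\omega\times x)=0$ and $\nabla\cdot\sigma=0$. They produce the volume terms $-\int_{\Omega_R}u\cdot[(\omega\times x)\cdot\nabla\sigma]\dx$ and $-\int_{\Omega_R}u\cdot[\sigma\cdot\nabla(\omega\times x)]\dx$, entering with opposite signs. These coincide because $(\omega\times x)\cdot\nabla\sigma=\omega\times\sigma=\sigma\cdot\nabla(\omega\times x)$, which is the paper's identity \eqref{divsig}. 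Hence the rotational contribution reduces to integrals over $\partial\Omega$ alone, bounded by $C(\Omega)|\Phi||\omega|(|\xi|+|\omega|)\le C(\Omega)|\Phi|\,\|u\|^2_{\mathcal V_R}$ via \eqref{abd}. This cancellation is the key point of the lemma, and neither your argument nor your unspecified fallback via $\nabla\times\sigma=0$ supplies it.

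\textbf{A smaller point: the center of mass.} It need not lie in $\mathcal S$, and the paper explicitly warns about this. You must center $\sigma$ at some $x_0\in\mathrm{int}\,\mathcal S$ and split $V=V(x_0)+\omega\times(x-x_0)$. The cancellation then holds with $x-x_0$ in place of $x$, and $V(x_0)$ is handled like your $\xi$-term; this is the paper's $V(x_0)\cdot\nabla\sigma$ contribution.
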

\begin{proof}
\textbf{Step 1:} Construction of flux-carrier.

In order to lift the boundary velocity $v_*$ when $\Phi \not=0$, (recall that $\Phi$ is defined in \eqref{defflu}). Fix $x_0 \in \text{int}({\mathcal B}_0)$ and take the flux carrier
$$
\Phi \sigma (x) = \Phi \nabla {\mathcal E}(x-x_0) = - \Phi \frac{x-x_0}{4 \pi |x-x_0|^3}.
$$
Note that it is not always possible to choose $x_0 = 0$, the center of mass of the rigid body, as $0$ may not lie in $\text{int}({\mathcal B}_0)$.

Since $\nabla \cdot \sigma = \Delta {\mathcal E}(\cdot -x_0) = - \delta_{x_0}$ in  ${\mathbb R}^3$ and $x_0 \in \text{int}({\mathcal B}_0)$, we have $\nabla \cdot \left(  \Phi \sigma \right) = 0$ in $\Omega$ and therefore $\int_{\partial \Omega} \sigma (x) \cdot n \dS = 1$.

By direct calculations, one finds 
\begin{equation}
\sigma \in L^q(\Omega)^3, \, q>3/2 \text{ and } \nabla \sigma \in L^q(\Omega)^{3 \times 3}, \, q>1,
\label{sumsig}
\end{equation}
and
\begin{equation}
(\omega \times (x-x_0))  \cdot \nabla  \sigma(x) -  \omega \times \sigma(x) = 0 .
\label{divsig}
\end{equation}
     
\textbf{Step 2:} Construction of a compactly supported component of $\widetilde{v_*}$.\\
Let us define $\beta_{*} \in W^{\frac{1}{2},2}(\partial \Omega)^3$ by
$\beta_{*}:=  v_{*}  -  \Phi \sigma|_{\partial \Omega}$. As noted above, we have the following  
 \begin{equation}\label{lem4.3-1}
 \int_{\partial \Omega} \beta_{*} \cdot n \dS = 0 \text{ and } \nabla \cdot \left(  \Phi \sigma \right) =0 \text{ in }\Omega.
 \end{equation} 
In this step, our aim is to construct an extension $\widetilde{\beta_{*}}$ of $\beta_{*}$ with compact support. For fixed $R_0 > 3 \delta(\mathcal{S})$, consider $$\vartheta (x) = \begin{cases} \vartheta^{(1)} (x), \quad x \in {\mathcal S} \\  \vartheta^{(2)} (x) , \quad x \in \Omega_{R_0}  \\  0, \quad x \in \overline{B^{R_0}}, \end{cases}$$
where $ \vartheta^{(1)}  \in W^{1,2}(\text{int}({\mathcal S}))$ is a solenoidal extension of $\beta_*$ in $\mc{S}$ such that 
\begin{equation}
\| \vartheta^{(1)} \|_{1,2,\text{int}({\mathcal S})} \leq C_1 (\mathcal{S}) \| \beta_* \|_{1/2,2,\partial \Omega},
\label{v1e}
\end{equation}
and $\vartheta^{(2)} \in W^{1,2}(\Omega_{R_0})$ is a solenoidal extension of $\beta_*$ in $\Omega_{R_0}$ with $\vartheta^{(2)} |_{\partial B_{R_0}}=0$, along with the estimate 
\begin{equation}
\| \vartheta^{(2)} \|_{1,2,{\Omega_{R_0}}} \leq C_2 (\Omega_{R_0}) \| \beta_* \|_{1/2,2,\partial \Omega}.
\label{v2e}
\end{equation}
Then $\vartheta \in W^{1,2}({\mathbb R}^3)$ and $\nabla \cdot \vartheta  = 0 $ in ${\mathbb R}^3$. From \eqref{v1e} and \eqref{v2e}, we obtain
\[
\| \vartheta \|_{1,2,\Omega_{R_0}} \leq C (\Omega_{R_0}) \| \beta_* \|_{1/2,2,\partial \Omega}.
\]

Since $\vartheta$ is of compact support, the convolution $z : = {\mathcal E} * \vartheta $ is well defined and $- \Delta z = \vartheta $ in ${\mathbb R}^3$. We have $z \in W^{2,2}({\mathbb R}^3)$, and using the relations
$\Delta z = \nabla (\nabla \cdot z ) - \nabla \times (\nabla \times z)$ and
$\nabla \cdot z = {\mathcal E} * (\nabla \cdot \vartheta ) = 0$, we conclude that $\nabla \times (\nabla \times z) = \vartheta$.

Now, consider a cut-off function $\theta_{R_0} \in C^\infty_0({\mathbb R}^3)$ such that
\[
\theta_{R_0} (x) = 1, \, |x| \leq R_0/3, \qquad \theta_{R_0} (x) = 0, \, |x| \geq 2 R_0/3, \qquad  0 \leq \theta_{R_0} (x) \leq 1, \, \forall x \in {\mathbb R}^3,
\]
and take $w:= \nabla \times ( \theta_{R_0} z)$. As  the solenoidal function  $\nabla \times w|_{\Omega} $ is equal to $\vartheta^{(2)}$ in a neighborhood of $\partial \Omega$ and equal to zero in a neighborhood of $\partial B_{R_0}$, we have
\begin{equation}
\nabla \times w =  \beta_*  \text{ on } \partial \Omega, \qquad w = \nabla \times w = 0 \text{ on } \partial B_{R_0},\qquad w=0 \text{ in } B^{R_0},
\label{weq0}
\end{equation}
\begin{equation}
\| w \|_{2,2,\Omega} \leq C(\Omega_{R_0}) \|\beta_*\|_{1/2,2,\partial \Omega} \leq C(\partial \Omega, R_0) \|v_*\|_{1/2,2,\partial \Omega}.
\label{nw2}
\end{equation}

Let $0 < \varepsilon <1$ and $\Phi_\varepsilon \in C^\infty(\mathbb R)$
be such that $\Phi_\varepsilon(t) = 1$ for $t <  \frac{\exp(-2/\varepsilon)}{2}$, $\Phi_\varepsilon(t) = 0$ for $t \geq 2 \exp(-1/\varepsilon)$, $|\Phi_\varepsilon(t)| \leq 1$ and $|\Phi'_\varepsilon(t)| \leq \varepsilon/ t $, for all $t >0$. Define $d(x):=\text{dist}(x,\partial \Omega_{R_0})$, the distance of a point $x \in \Omega_{R_0}$ to the boundary $\partial \Omega_{R_0}$, and let $\rho(x)$ be the corresponding regularized distance in the sense of Stein. Using these, define the cut-off function for the domain $\Omega_{R_0}$ 
$$
\psi_\varepsilon(x) := \Phi_\varepsilon(\rho(x)), \, x \in \Omega_{R_0}
$$ 
and extend it by 1 to the exterior domain $\Omega^{R_0}:= \Omega \cap B^{R_0}$. The cut-off function satisfies 
\begin{equation}\label{psiep}
\psi_\varepsilon(x)= \begin{cases}
1 & \text{if} \quad d(x) < \frac{\exp(-2/\varepsilon)}{2 \kappa_1},\\
0 & \text{if} \quad d(x) \geq 2 \exp(-1/\varepsilon),\\
1 & \text{if}\quad x\in \Omega^{R_0},
\end{cases}
\end{equation}
and we also have (see \cite[Lemma III.6.2]{G})
\begin{equation}
|\nabla \rho(x)| \leq \kappa_2 , \qquad \nabla \psi_\varepsilon(x) = \Phi'_\varepsilon(\rho(x)) \nabla \rho(x),   \qquad | \nabla \psi_\varepsilon(x) | \leq \frac{\varepsilon \kappa_2}{d(x)},
\label{psie12}
\end{equation}
where $\kappa_1$ and $\kappa_2$ are positive constants independent of the domain. Furthermore,  given $s>0$, there exists a positive constant $c=c(s,\Omega,R_0)$ such that (see \cite[Lemma IX.4.2]{G})
\begin{equation}
|\text{supp} (\psi_\varepsilon)|^s \leq c \varepsilon.
\label{eqep3}
\end{equation}

For a fixed $\varepsilon >0$, the desired extension of $\beta_*$ is defined by $\widetilde{\beta_*} = \nabla \times (\psi_\varepsilon w)$, with $\psi_\varepsilon$ defined in \eqref{psiep}.

\textbf{Step 3:} Definition of extension of $v_*$ and its properties.

 Let us define the  extension of $v_*$ by
\[
\widetilde{v_*} (x) =  \nabla \times (w(x)\psi_\varepsilon(x))  + \Phi \sigma(x) 
 = \nabla \psi_\varepsilon(x) \times w(x) +  \psi_\varepsilon(x) \nabla \times w(x)   + \Phi \sigma(x) , \quad  x \in \Omega.
\]
 The choice of $\varepsilon$ will depend on the given $\gamma$.

For $R > R_0$, we have
\[
\begin{aligned}
\int_{\Omega_R}  (u  - V) \cdot {\mathbb W}(u) \cdot \widetilde{v_*} \dx 
 =  & \int_{\Omega_{R_0}} \psi_\varepsilon  (u - V) \cdot {\mathbb W}(u) \cdot  \nabla \times w \dx \\
& + \int_{\Omega_{R_0}} (u - V) \cdot {\mathbb W}(u) \cdot ( \nabla  \psi_\varepsilon \times w )\dx  \\
& +  \, \Phi   \int_{\Omega_R} (u - V)  \cdot {\mathbb W}(u) \cdot \sigma \dx =: I_1 + I_2 + I_3,
\end{aligned}
\]
where $u \in {\mathcal V}_R$ and $V=u_{\mathcal S} \in {\mathcal R}$.

Let $\Omega_{R_0,\varepsilon} := \{ x \in \Omega_{R_0} \mid d(x) < 2 \exp(-1/\varepsilon) \}$. The integral $I_1$, is estimated as
\[
|I_1| \leq  \| u - V \|_{4,\Omega_{R_0,\varepsilon}}\|{\mathbb W}(u) \|_{2,\Omega_{R_0}} \| \nabla \times w \|_{4,\Omega_{R_0}} \leq C(\Omega_{R_0})\| u - V \|_{4,\Omega_{R_0,\varepsilon}}\|u \|_{{\mathcal V}_R} \| \beta_* \|_{1/2,2,\partial \Omega},
\]
where \eqref{kor} and \eqref{nw2} have been  applied. Now, using \eqref{eqep3}, we obtain
\[
 \| u - V \|_{4,\Omega_{R_0,\varepsilon}} \leq |\Omega_{R_0,\varepsilon}|^{1/2} \| u - V \|_{6,\Omega_{R_0,\varepsilon}} \leq c \varepsilon \| u - V \|_{6,\Omega_{R_0}}
\]
and invoking \eqref{abd} and \eqref{n6d}, 
\[
 \| u - V \|_{6,\Omega_{R_0}} 
 \leq  \| u \|_{6,\Omega_{R_0}} + C(\Omega_{R_0}) \left( |\xi| + |\omega| \right) \leq C(\Omega_{R_0}) \| u \|_{{\mathcal V}_R}.
\]
Given $\gamma >0$, we can take $\varepsilon$ small enough to get (see \cite[Lemma X.4.2]{G})
\[
2 \,  \mathrm{Re} \, |I_1|  
 \leq  \frac{\gamma}{2} \| u \|_{{\mathcal V}_R}^2 .
\]    

Concerning the integral $I_2$, note that $
(u - V) \cdot {\mathbb W}(u) \cdot ( \nabla  \psi_\varepsilon \times w ) =  {\mathbb W}(u) : (u - V) \otimes ( \nabla  \psi_\varepsilon  \times w )$
where $(u-V)|_{\partial \Omega} = 0$, and recalling \eqref{weq0}, we have $w|_{\partial B_{R_0}} = 0$. Therefore,  the combined term $(u - V) \otimes  w $, satisfies
\begin{equation}
(u - V) \otimes  w  \in W^{1,2}_0(\Omega_{R_0})^{3 \times 3}.
\label{w1o}
\end{equation}
Using the properties \eqref{psie12} of $\psi_\varepsilon$ and \eqref{nw2}, it is possible to estimate $I_2$ in the form
\[
\begin{aligned}
|I_2|  \leq & \, \| {\mathbb W}(u) \|_{2,\Omega_R} \left \| (u - V) \otimes w \otimes ( \nabla  \psi_\varepsilon ) \right \|_{2,\Omega_{R_0}} \\
 \leq & \, \varepsilon C  \| u \|_{{\mathcal V}_R} \left \| \frac{(u - V) \otimes w }{d(x)} \right \|_{2,\Omega_{R_0}} \\
  \leq & \, \varepsilon C  \| u \|_{{\mathcal V}_R} \left( \left \| \nabla(u - V)  \right \|_{2,\Omega_{R_0}} \left \|  w  \right \|_{\infty,\Omega_{R_0}}
  + \left \| u - V \right \|_{6,\Omega_{R_0}} \left \| \nabla  w  \right \|_{3,\Omega_{R_0}} \right) \\
  \leq  & \, \varepsilon C(\Omega_{R_0}) \| \beta_* \|_{1/2,2,\partial \Omega}  \|u \|^2_{{\mathcal V}_R} 
\end{aligned}
\]
where the Hardy inequality has been applied to the whole $(u - V) \otimes  w $. Note that \eqref{w1o} is crucial to obtain this bound for $I_2$.
For the given $\gamma$, we choose $\varepsilon$ sufficiently small to obtain, similarly to the estimate obtained in \cite[Lemma X.4.2]{G}, 
\[
2 \,  \mathrm{Re} \, | I_2 | \leq  2 \,  \mathrm{Re}  \| {\mathbb W}(u) \|_{2,\Omega_R} \left \| (u - V) \otimes w \otimes ( \nabla  \psi_\varepsilon ) \right \|_{2,\Omega_{R_0}} \leq  \frac{\gamma}{2} \| u \|_{{\mathcal V}_R}^2.
\]

After performing integration by parts and applying relation \eqref{divsig}, the integral $I_3$ can be written as
\[
\begin{aligned} 
I_3 
 =& \,
 \frac 12  \int_{\Omega_R} (u - V)  \cdot \nabla u  \cdot \sigma \dx -  \frac 12  \int_{\Omega_R} \sigma \cdot  \nabla u \cdot (u - V)  \dx\\
 =&
  - \frac 12   \int_{\Omega_R} u \cdot \nabla  \sigma \cdot u \dx  + \frac 12   \int_{\Omega_R}   \sigma \cdot \nabla u \cdot u \dx \\
& + \frac 12   \int_{\Omega_R} \xi \cdot \nabla  \sigma \cdot u \dx + \frac 12   \int_{\Omega_R} [ (\omega \times x)  \cdot \nabla  \sigma -  \sigma \cdot \nabla (\omega \times x) ] \cdot u \dx \\
= & - \frac 12   \int_{\Omega_R} u \cdot \nabla  \sigma \cdot u \dx  + \frac 12   \int_{\Omega_R}   \sigma \cdot \nabla u \cdot u \dx + \frac 12   \int_{\Omega_R}V(x_0) \cdot \nabla  \sigma \cdot u \dx.
 \end{aligned}
\]
By resorting to \eqref{kor}, \eqref{abd} and \eqref{sumsig}, we get
\[
\begin{aligned}
2 \, \mathrm{Re}  \, |I_3| \leq & \, \mathrm{Re} \, \Phi \left(  \| u \|_{6,\Omega_R}^2 \| \nabla \sigma \|_{3/2,\Omega} + \| \sigma \|_{3,\Omega}  \| \nabla u \|_{2,\Omega_R}\| u \|_{6,\Omega_R}  \right. \medskip \\
& \left. 
+ \,   |\xi + \omega \times x_0| \| \nabla  \sigma \|_{6/5,\Omega} \| u \|_{6,\Omega_R} \right) \medskip \\
\leq & \, \mathrm{Re} \, \Phi  \, C_0(\Omega) \| u \|_{{\mathcal V}_R}^2.  
\end{aligned}
\]

Concerning the estimate for $\| \widetilde{v_*}\|_{1,2,\Omega}$, return to 
\[
\widetilde{v_*} (x)  
 = \nabla \psi_\varepsilon(x) \times w(x) +  \psi_\varepsilon(x) \nabla \times w(x)   + \Phi \sigma(x) , \quad  x \in \Omega,
\]
where $w$ and $\psi_{\varepsilon}$ satisfy \eqref{nw2} and \eqref{psiep}, respectively, and follow the same argument used in the last part of the proof of \cite[Lemma X.4.2]{G}. If $\| v_{*} \|_{1/2,2,\partial \Omega} \leq M$, for some $M>0$, then there are $\gamma >0$ and $C(\Omega,\gamma,M)>0$ such that
\[
\| \widetilde{v_{*}} \|_{1,2,\Omega} \leq C(\Omega,\gamma,M)\| v_{*} \|_{1/2,2,\partial \Omega}.
\]
\end{proof}
\begin{remark}
In the proof of Lemma \ref{lemaextD}, we have used  different cut-offs $\theta_{R_0}$, $\Phi_\varepsilon$ and $\psi_{\varepsilon}$. Here $\theta_{R_0}$ is used to obtain the boundary condition $w=0 \text{ in } B^{R_0}$ in
\eqref{weq0} which is crucial to obtain \eqref{w1o}, whereas $\Phi_\varepsilon$ and $\psi_{\varepsilon}$ are the standard cut-offs used in the Hopf method as mentioned in \eqref{esttrinonlcl}.
\end{remark}
\section{Existence of a weak solution}
\label{CWSG}
The extension of $v_*$ constructed in Lemma \ref{lemaextD} is a key ingredient in the proof of Theorem \ref{maim}. Observe that the extension depends on a parameter $\gamma$, whose choice will be specified during the proof.
\begin{proof}  [Proof of Theorem \ref{maim}] Our aim is to find $u := v - \widetilde{v_*} \in {\mathcal V}$ such that $u$ solves \eqref{wform}. In a first step, we fix $R > \delta(\mathcal S)$ and solve the truncated problem associated with \eqref{wform} in the bounded domain $\Omega_R$. Since $\mathrm{Re} \, \widetilde{v_*} \otimes \widetilde{v_*} - 2  {\mathbb D}(\widetilde{v_*} ) \in L^2(\Omega_R)^{3 \times 3}$, the map
$$
\begin{aligned}
{\mathcal V}_R \ni \varphi \longmapsto &  \, \mathrm{Re} \int_{\Omega_R} \widetilde{v_*} \cdot \nabla \varphi \cdot \widetilde{v_*} \dx - 2 \int_{\Omega_R} {\mathbb D}(\widetilde{v_*}): {\mathbb D}({\varphi})\dx \\
  & \, =  \mathrm{Re} \int_{\Omega_R} ( \widetilde{v_*} \otimes \widetilde{v_*} ) : {\mathbb D}({\varphi}) \dx - 2 \int_{\Omega_R} {\mathbb D}(\widetilde{v_*}): {\mathbb D}({\varphi})\dx  \\
  & \, =  \int_{\Omega_R} \left(  \mathrm{Re} \, \widetilde{v_*} \otimes \widetilde{v_*} - 2  {\mathbb D}(\widetilde{v_*} )\right) : {\mathbb D}({\varphi}) \dx   \end{aligned}
$$
defines a linear continuous functional on $\mathcal{V}_R$. Therefore, by Riesz representation Theorem, there exists $\mathcal{F}(\widetilde{v_*}) \in \mathcal{V}_R$ such that 
\begin{equation}
\left(\mathcal{F} (\widetilde{v_*}), \varphi \right)_{{\mathcal V}_R} = \int_{\Omega_R} \left(  \mathrm{Re} \, \widetilde{v_*} \otimes \widetilde{v_*} - 2  {\mathbb D}(\widetilde{v_*} )\right) : {\mathbb D}({\varphi}) \dx ,
\label{defF}
\end{equation}
\begin{equation}
\| \mathcal{F} (\widetilde{v_*}) \|_{\mathcal{V}_R} \leq C(\Omega) (1 + \mathrm{Re} )  \| \widetilde{v_*} \|_{W^{1,2}(\Omega)}.
\label{estF}
\end{equation}
For a fixed $u \in \mathcal{V}_R$, with $u_{\mathcal S}(x) = \xi + \omega \times x = V(x)$,  the map
$$
\begin{aligned}
{\mathcal V}_R \ni \varphi \longmapsto\   & \mathrm{Re} \int_{\Omega_R} \widetilde{v_*} \cdot \nabla \varphi \cdot u \dx + \mathrm{Re} \int_{\Omega_R}(u  - V) \cdot \nabla \varphi \cdot \widetilde{v_*} \dx \\
 & - \mathrm{Re} \int_{\Omega_R} ( \omega \times \widetilde{v_*}) \cdot \varphi \dx  
 \end{aligned}
$$
defines a linear continuous functional on $\mathcal{V}_R$. Again, by Riesz representation Theorem, there exists $\mathcal{K}(u,\widetilde{v_*})$ such that 
\begin{equation}
\begin{aligned}
\left(\mathcal{K}(u,\widetilde{v_*}), \varphi \right)_{{\mathcal V}_R} = & \, \mathrm{Re} \int_{\Omega_R} \widetilde{v_*} \cdot \nabla \varphi \cdot u \dx + \mathrm{Re} \int_{\Omega_R}(u - V) \cdot \nabla \varphi \cdot \widetilde{v_*} \dx \\ 
& - \mathrm{Re} \int_{\Omega_R} ( \omega \times \widetilde{v_*}) \cdot \varphi \dx  \, .
\end{aligned}
 \label{KR}
 \end{equation}
along with the estimate
\begin{equation}
\| \mathcal{K}(u,\widetilde{v_*}) \|_{{\mathcal V}_R}  \leq C(\Omega) \mathrm{Re} \| \widetilde{v_*} \|_{W^{1,2}(\Omega)} \| u \|_{{\mathcal V}_R}  \, .
\label{estK}
\end{equation}
Analogously, for a fixed $u \in \mathcal{V}_R$,  there exists $\mathcal{G}(u,u)$ such that
\begin{equation}
\begin{aligned}
\left( \mathcal{G}(u,u) , \varphi \right)_{{\mathcal V}_R}    =  & \, \mathrm{Re} \int_{\Omega_R}(u - V) \cdot \nabla \varphi \cdot u \dx -  \mathrm{Re}  \int_{\Omega} ( \omega \times u ) \cdot \varphi \dx \\
 & + \mathrm{Re} \, m \, \xi \times \omega \cdot a_\varphi  + \mathrm{Re} \, (J\omega)\times\omega \cdot b_\varphi \, ,
 \end{aligned}
 \label{GR}
\end{equation}
and
\begin{equation}
\| \mathcal{G}(u,u) \|_{{\mathcal V}_R}  \leq C(\Omega) \| u \|^2_{{\mathcal V}_R}  \, .
\label{estG}
\end{equation}
Hence, the identity \eqref{wform} restricted to $\Omega_R$ may be written as
\begin{equation}
\left(u_R + \mathcal{G}(u_R,u_R) + \mathcal{K}(u_R,\widetilde{v_*}) - \mathcal{F}(\widetilde{v_*}), \varphi \right)_{{\mathcal V}_R} = 0, \quad \forall\  \varphi \in {\mathcal V}_R \, ,
\label{wfvR}
\end{equation}
and, using the operator $A^{\frac 12} $ and  \eqref{a12}, we have
$$
\left(A^{\frac 12} u_R + A^{\frac 12}  \mathcal{G}(u_R,u_R) + A^{\frac 12}  \mathcal{K}(u_R,\widetilde{v_*}) - A^{\frac 12} \mathcal{F}(\widetilde{v_*}), A^{\frac 12}  \varphi \right)_{{\mathcal H}_R} = 0, \quad \forall\  \varphi \in {\mathcal V}_R \, .
$$

Now, defining
$$
U_R := A^{\frac 12} u_R,  \qquad \phi := A^{\frac 12}  \varphi ,
$$
we have 
$$
 \left(U_R + A^{\frac 12}  \mathcal{G}(A^{-\frac 12} U_R,A^{-\frac 12} U_R) + A^{\frac 12}  \mathcal{K}(A^{-\frac 12} U_R,\widetilde{v_*})  - A^{\frac 12} \mathcal{F}(\widetilde{v_*}),  \phi \right)_{{\mathcal H}_R} = 0, \quad \forall \phi \in {\mathcal H}_R \, .
$$

By means of the operator $$B : {\mathcal H}_R  \longrightarrow {\mathcal H}_R, \qquad B U  :=  A^{\frac 12} \mathcal{F}(\widetilde{v_*}) - A^{\frac 12}  \mathcal{K}(A^{-\frac 12} U,\widetilde{v_*})  - A^{\frac 12}  \mathcal{G}(A^{-\frac 12} U,A^{-\frac 12} U) $$ 
we are led to find a solution $U_R  \in {\mathcal H}_R$ of the following nonlinear operator equation
\begin{equation}\label{oseenquasi2}
	U_R = B U_R  \quad \text{in} \ \ {\mathcal H}_R \, .
\end{equation}
One can show that the operator $B$ satisfies the following properties. 
\begin{enumerate}
\item From \eqref{na12} and the estimates \eqref{estF}, \eqref{estK} and \eqref{estG}, we obtain 
$$
\begin{aligned}
\|B U \|_{{\mathcal H}_R} \leq  & \,  \| A^{\frac 12} \mathcal{F}(\widetilde{v_*}) \|_{{\mathcal H}_R} + \| A^{\frac 12}  \mathcal{K}(A^{-\frac 12} U,\widetilde{v_*}) \|_{{\mathcal H}_R}
+ \| A^{\frac 12}  \mathcal{G}(A^{-\frac 12} U,A^{-\frac 12} U) \|_{{\mathcal H}_R} \\ & =  \| 
 \mathcal{F}(\widetilde{v_*}) \|_{{\mathcal V}_R} + \| \mathcal{K}(A^{-\frac 12} U,\widetilde{v_*}) \|_{{\mathcal V}_R} + \| \mathcal{G}(A^{-\frac 12} U,A^{-\frac 12} U) \|_{{\mathcal V}_R}  \\
 & \leq \,   \| 
 \mathcal{F}(\widetilde{v_*}) \|_{{\mathcal V}_R} + C(\Omega) \mathrm{Re} \left( \| \widetilde{v_*} \|_{W^{1,2}(\Omega)} + \| A^{-\frac 12} U \|_{{\mathcal V}_R} \right) \| A^{-\frac 12} U \|_{{\mathcal V}_R}   \\
 & = \,   \| 
 \mathcal{F}(\widetilde{v_*}) \|_{{\mathcal V}_R} + C(\Omega) \mathrm{Re} \left( \| \widetilde{v_*} \|_{W^{1,2}(\Omega)} + \| U \|_{{\mathcal H}_R} \right) \| U \|_{{\mathcal H}_R}  ,
 \end{aligned}
 $$
 for all $U \in {\mathcal H}_R$.
 Moreover,  for $U,W \in {\mathcal H}_R$, we have
 $$
 \begin{aligned}
 \|B U - B W \|_{{\mathcal H}_R}  \leq  & \, \| A^{\frac 12}  \mathcal{K}(A^{-\frac 12} (U-W),\widetilde{v_*})  \|_{{\mathcal H}_R}  \\
 & + \|  A^{\frac 12}  \mathcal{G}(A^{-\frac 12} U,A^{-\frac 12} U) - A^{\frac 12}  \mathcal{G}(A^{-\frac 12} W,A^{-\frac 12} W) \|_{{\mathcal H}_R} \\
 := & \, S_1 + S_2.
  \end{aligned}
 $$
We set $u:=A^{-\frac 12}U,w:=A^{-\frac 12} W \in {\mathcal V}_R$. Using the relation \eqref{A-12} and \eqref{estK}, we can estimate $S_1$ in the following way
 $$
 \begin{aligned}
 S_1 \leq & \, C(\Omega) \mathrm{Re}  \| \widetilde{v_*} \|_{W^{1,2}(\Omega)}  \| u  - w  \|_{{\mathcal V}_R}  \\
 =  & \, C(\Omega) \mathrm{Re}  \| \widetilde{v_*} \|_{W^{1,2}(\Omega)}  \| A^{-1/2}U - A^{-1/2} W  \|_{{\mathcal V}_R}  \\
 = &  \, C(\Omega) \mathrm{Re}  \| \widetilde{v_*} \|_{W^{1,2}(\Omega)}  \| U - W  \|_{{\mathcal H}_R}.
  \end{aligned}
 $$
Similarly, we can estimate 
 $$
 \begin{aligned}
 S_2 = & \,  \|  \mathcal{G}(A^{-\frac 12} U,A^{-\frac 12} U) - \mathcal{G}(A^{-\frac 12} W,A^{-\frac 12} W) \|_{{\mathcal V}_R} \\
 = & \,  \|  \mathcal{G}(u,u) - \mathcal{G}(w,w) \|_{{\mathcal V}_R} \\
 \leq & \, C(\Omega_R) \left( \| u \|_{{\mathcal V}_R}  +  \| w \|_{{\mathcal V}_R}  \right) \| u - w \|_{{\mathcal V}_R}  \\ 
  = & \, C(\Omega_R) \left( \| U \|_{{\mathcal H}_R}  +  \| W \|_{{\mathcal H}_R}  \right) \| U - W \|_{{\mathcal H}_R} .
   \end{aligned}
   $$
   We conclude that
   \begin{equation}
    \|B U - B W \|_{{\mathcal H}_R}  \leq \left[ C(\Omega) \mathrm{Re}  \| \widetilde{v_*} \|_{W^{1,2}(\Omega)}  + C(\Omega_R) \left( \| U \|_{{\mathcal H}_R}  +  \| W \|_{{\mathcal H}_R}  \right) \right]  \| U - W \|_{{\mathcal H}_R} .
    \label{bubw}
   \end{equation}
 \item Let $\{U_j\}_{j \in \mathbb N} \in {\mathcal H}_R$ be a sequence which converges strongly to $U \in {\mathcal H}_R$. Then, from \eqref{bubw} we deduce
 \begin{equation}
 \|B U - B U_j\|_{{\mathcal H}_R}  \leq \left[ C(\Omega) \mathrm{Re}  \| \widetilde{v_*} \|_{W^{1,2}(\Omega)}  + C(\Omega_R) \left( \| U \|_{{\mathcal H}_R}  +  \| U_j \|_{{\mathcal H}_R}  \right) \right] \| U - U_j \|_{{\mathcal H}_R}.
  \label{UUj}
  \end{equation}
  Observe that $\|B U - B U_j\|_{{\mathcal H}_R}\to 0\text{ as } j \to \infty$.
 This shows that $B$ is a continuous operator.
\item To show that $B$ is completely continuous (see definition in \cite[Page 92]{Sohr2012}), we consider a bounded sequence $\{U_j\}_{j \in \mathbb N}$  in ${\mathcal H}_R$. Since $A^{-1/2}$ is compact,  the sequence $\{A^{-1/2}U_j\}_{j \in \mathbb N} \subset {\mathcal V}_R$ contains a strongly convergent subsequence $\{A^{-1/2}U_{j'}\}_{j' \in \mathbb N}$ in ${\mathcal V}_R$ and therefore this subsequence is a Cauchy sequence in  ${\mathcal V}_R$. On the other hand,
since $$\| U_{j'} - U_{k'} \|_{{\mathcal H}_R}  = \| A^{-1/2}U_{j'} -  A^{-1/2}U_{k'} \|_{{\mathcal V}_R} ,$$
it follows that $\{U_{j'}\}_{j' \in \mathbb N}$ is a Cauchy sequence in ${\mathcal H}_R$ and therefore it strongly converges to some $U$ in ${\mathcal H}_R$. From \eqref{UUj} it follows that
$$
\|B U - B U_{j'}\|_{{\mathcal H}_R}   \to 0, \text{ as } j \to \infty.
$$
and we conclude that $\{B U_j\}_{j \in \mathbb N}$ contains a strongly convergent subsequence. 
\end{enumerate}

By the Leray-Schauder Theorem (see \cite[3.1.1 Lemma, Page 93]{Sohr2012}), in order to prove that the problem in $\Omega_R$ possesses at least one solution, it suffices to guarantee that any $U_R^{\lambda} \in {\mathcal H}_R$ such that
\begin{equation}\label{oseenquasi3}
	U^{\lambda}_{R} = \lambda B U^{\lambda}_R  \quad \text{in} \ \ {\mathcal H}_R \, ,
\end{equation}
is uniformly bounded with respect to $\lambda \in [0, 1]$. 

Given $\lambda \in [0, 1]$ and $U_R \in \mathcal{H}_R \setminus \{0\}$ such that \eqref{oseenquasi3} holds, we have
$$
\left(U^{\lambda}_{R},U^{\lambda}_{R}\right)_{{\mathcal H}_R} = \lambda \left(B U^{\lambda}_R, U^{\lambda}_{R}\right)_{{\mathcal H}_R}.
$$
Setting $u^{\lambda}_R =  A^{- \frac 12} U_R \in \mathcal{V}_R \setminus \{0\}$, we can write
\begin{equation}
\| U^{\lambda}_{R} \|^2_{{\mathcal H}_R} = \left(U^{\lambda}_{R},U^{\lambda}_{R}\right)_{{\mathcal H}_R}  =   \left( A^{\frac 12} u^{\lambda}_R, A^{\frac 12} u^{\lambda}_R\right)_{{\mathcal H}_R}  =   \left(u^{\lambda}_R, u^{\lambda}_R\right)_{{\mathcal V}_R} =  2 \| {\mathbb D}(u^{\lambda}_{R})\|_{2,\Omega_{R}}^2,
\label{UAul}
\end{equation}
and, by \eqref{defF},  \eqref{KR}  and  \eqref{GR}, we get
$$
\begin{aligned}
 \lambda \left(B U^{\lambda}_R, U^{\lambda}_{R}\right)_{{\mathcal H}_R}
& =  \lambda \left(A^{\frac 12} \mathcal{F}(\widetilde{v_*}) - A^{\frac 12}  \mathcal{K}(A^{-\frac 12} U^{\lambda}_R,\widetilde{v_*})
- A^{\frac 12}  \mathcal{G}(A^{-\frac 12} U^{\lambda}_R,A^{-\frac 12} U^{\lambda}_R), U^{\lambda}_{R}\right)_{{\mathcal H}_R} \\
& =  \lambda \left(A^{\frac 12} \mathcal{F}(\widetilde{v_*}) - A^{\frac 12}  \mathcal{K}(u^{\lambda}_R,\widetilde{v_*})  - A^{\frac 12}  \mathcal{G}(u^{\lambda}_R, u^{\lambda}_R), A^{\frac 12}u^{\lambda}_{R}\right)_{{\mathcal H}_R} \\
& =  \lambda \left( \mathcal{F}(\widetilde{v_*})  - \mathcal{K}(u^{\lambda}_R,\widetilde{v_*})  -  \mathcal{G}(u^{\lambda}_R, u^{\lambda}_R), u^{\lambda}_{R}\right)_{{\mathcal V}_R} \\
& = \lambda  \mathrm{Re} \int_{\Omega_R} (u^{\lambda}_R - V^{\lambda}_R) \cdot \nabla u^{\lambda}_R \cdot u^{\lambda}_R \dx -  \lambda  \mathrm{Re}  \int_{\Omega_R} ( \omega^\lambda_R \times u^{\lambda}_R ) \cdot u^{\lambda}_R\dx \\
 & + \lambda  \mathrm{Re} \, m \, \xi_R^\lambda \times \omega_R^\lambda \cdot \xi^{\lambda}_R + \lambda  \mathrm{Re} \, (J\omega_R^\lambda)\times\omega_R^\lambda \cdot \omega^{\lambda}_R + \lambda \mathrm{Re} \int_{\Omega_R} \widetilde{v_*} \cdot \nabla u^{\lambda}_R\cdot u^{\lambda}_R \dx \\
 & + \lambda  \mathrm{Re} \int_{\Omega}(u^{\lambda}_R - V^{\lambda}_R) \cdot \nabla u^{\lambda}_R \cdot \widetilde{v_*} \dx  - \lambda  \mathrm{Re} \int_{\Omega_R} ( \omega_R^\lambda \times \widetilde{v_*}) \cdot u^{\lambda}_R \dx \\
 &
  + \lambda  \mathrm{Re} \int_{\Omega_R} \widetilde{v_*} \cdot \nabla u^{\lambda}_R \cdot \widetilde{v_*} \dx - 2 \lambda  \int_{\Omega_R} {\mathbb D}(\widetilde{v_*}): {\mathbb D}(u^{\lambda}_R)\dx .
\end{aligned}
$$
Due to the definition of ${\mathcal V}_R$, using integration by parts, we obtain 
\begin{equation}\label{reduce1}
\int_{\Omega_R} (u^{\lambda}_R - V^{\lambda}_R) \cdot \nabla u^{\lambda}_R \cdot u^{\lambda}_R \dx=0, \forall u^{\lambda}_R \in {\mathcal V}_R.    
\end{equation}
Moreover, 
\begin{equation}\label{reduce2}
( \omega^\lambda_R \times u^{\lambda}_R ) \cdot u^{\lambda}_R = 0,\quad  m \, \xi_R^\lambda \times \omega_R^\lambda \cdot \xi^{\lambda}_R=0,\quad (J\omega_R^\lambda)\times\omega_R^\lambda \cdot \omega^{\lambda}_R=0.
\end{equation}
Using \eqref{reduce1}-\eqref{reduce2}, we obtain
\[ 
\begin{aligned}
 \lambda \left(B U^{\lambda}_R, U^{\lambda}_{R}\right)_{{\mathcal H}_R} = & \,  \lambda \mathrm{Re} \int_{\Omega_R} \widetilde{v_*} \cdot \nabla u^{\lambda}_R\cdot u^{\lambda}_R \dx \\
 & + \lambda  \mathrm{Re} \int_{\Omega}(u^{\lambda}_R - V^{\lambda}_R) \cdot \nabla u^{\lambda}_R \cdot \widetilde{v_*} \dx  - \lambda  \mathrm{Re} \int_{\Omega_R} ( \omega_R^\lambda \times \widetilde{v_*}) \cdot u^{\lambda}_R \dx \\
 &
  + \lambda  \mathrm{Re} \int_{\Omega_R} \widetilde{v_*} \cdot \nabla u^{\lambda}_R \cdot \widetilde{v_*} \dx - 2 \lambda  \int_{\Omega_R} {\mathbb D}(\widetilde{v_*}): {\mathbb D}(u^{\lambda}_R)\dx  \\
=  & \, 2 \,  \lambda \mathrm{Re}  \int_{\Omega_{R}} (u^\lambda_{R} - V^\lambda_{R}) \cdot {\mathbb W}(u^\lambda_{R}) \cdot \widetilde{v_*} \dx   - 2 \lambda \int_{\Omega_{R}} {\mathbb D}(\widetilde{v_*}): {\mathbb D}(u^\lambda_{R})\dx  \\
& + \lambda \mathrm{Re}  \int_{\Omega_R} \widetilde{v_*} \cdot \nabla u^\lambda_{R} \cdot \widetilde{v_*} \dx.   
\end{aligned}
 \]
We conclude that
\[ 
\begin{aligned}
2 \| {\mathbb D}(u^\lambda_{R})\|_{2,\Omega_{R}}^2   
 = & \, 
  2 \,  \lambda \mathrm{Re}  \int_{\Omega_{R}} (u^\lambda_{R} - V^\lambda_{R}) \cdot {\mathbb W}(u^\lambda_{R}) \cdot \widetilde{v_*} \dx  \\  & - 2 \lambda  \int_{\Omega_{R}} {\mathbb D}(\widetilde{v_*}): {\mathbb D}(u^\lambda_{R})\dx  +  \mathrm{Re} \lambda  \int_{\Omega_R} \widetilde{v_*} \cdot \nabla u^\lambda_{R} \cdot \widetilde{v_*} \dx.   
\end{aligned}
 \]
Combining this identity with Lemma \ref{lemaextD}, we obtain the  estimate
\[ 
\begin{aligned}
    \| u^\lambda_{R}  \|_{{\mathcal V}_{R}}^2  \leq & \left(\gamma +  C_0(\Omega) \,  \mathrm{Re} \, | \Phi  | \right) \| u^\lambda_{R} \|_{{\mathcal V}_{R}}^2 \\
    &  +   C_1(\Omega) \,  \mathrm{Re}  \|\widetilde{v_*}\|_{1,2,\Omega} \| u^\lambda_{R} \|_{{\mathcal V}_R} +  C_2(\Omega) \,  \mathrm{Re} \|\widetilde{v_*}\|^2_{1,2,\Omega} \| u^\lambda_{R} \|_{{\mathcal V}_{R}},
\end{aligned}
 \]
for all $R > \delta(\mathcal S)$ and $\lambda \in [0,1]$. Note that the extension $\widetilde{v_*}$ depends on $\gamma$. Suppose  $ C_0(\Omega) \,  \mathrm{Re} \, | \Phi  |  <1$. Then it is possible to choose $\gamma >0$ and the corresponding extension $\widetilde{v_*}$ so that
$$
1 - \left(\gamma +  C_0(\Omega) \,  \mathrm{Re} \, | \Phi  | \right) > 0
$$
and this yields the uniform bound
\begin{equation}
\| u^\lambda_{R} \|_{{\mathcal V}_{R}}  \leq  \frac{  \mathrm{Re} }{1 - \left(\gamma +  C_0(\Omega) \,  \mathrm{Re} \, | \Phi  | \right)} \left( C_1(\Omega)\|\widetilde{v_*}\|_{1,2,\Omega}  + C_2(\Omega) \|\widetilde{v_*}\|^2_{1,2,\Omega} \right),
\label{uRk}
\end{equation}
for all $R > \delta(\mathcal S)$ and $\lambda \in [0,1]$. Recalling that $
U_R^\lambda = A^{1/2}u^\lambda_{R}$ and relations \eqref{UAul}, from \eqref{uRk} we get
\begin{equation}
\| U^\lambda_{R} \|_{{\mathcal H}_{R}}  \leq  \frac{  \mathrm{Re} }{1 - \left(\gamma +  C_0(\Omega) \,  \mathrm{Re} \, | \Phi  | \right)} \left( C_1(\Omega)\|\widetilde{v_*}\|_{1,2,\Omega}  + C_2(\Omega) \|\widetilde{v_*}\|^2_{1,2,\Omega} \right),
\label{Uk}
\end{equation}
for all $R > \delta(\mathcal S)$ and $\lambda \in [0,1]$.

Hence, the Leray-Schauder Theorem yields existence of a solution $U_R \in {\mathcal H}_R$ to equation \eqref{oseenquasi2}. This, in turn, is equivalent to the existence of a solution $u_R \in {\mathcal V}_R$ to \eqref{wfvR}, which is the truncation of the weak problem \eqref{wform} to the domain $\Omega_R$. This approach is then combined with the invading domains technique, as in \cite{Hey,GS2002}, to generate a new sequence of approximating solutions. We take a sequence of bounded subdomains of $\Omega$ such that \[\Omega = \cup_{j=1}^\infty \Omega_{R_j},\] and  construct a sequence  $\{(u_{R_j}, \xi_{R_j}, \omega_{R_j})\}_{j \in \mathbb N}$, such that
\begin{equation}
\| u_{R_j} \|_{{\mathcal V}_{R_j}}  \leq  \frac{  \mathrm{Re} }{1 - \left(\gamma +  C_0(\Omega) \,  \mathrm{Re} \, | \Phi  | \right)} \left( C_1(\Omega)\|\widetilde{v_*}\|_{1,2,\Omega}  + C_2(\Omega) \|\widetilde{v_*}\|^2_{1,2,\Omega} \right), \quad \forall j \in {\mathbb N}.
\label{uuk}
\end{equation}
Since the estimate \eqref{uuk} is uniform in $j$, passing to the limit $j \to \infty$ along a suitable subsequence, yields the desired weak solution $(u,V)$ for the exterior domain $\Omega$, which satisfies \eqref{uuk} with $u_{R_j}$ replaced by $u$.

Suppose that $
 \mathrm{Re} \, C_0(\Omega) | \Phi  | \leq 1/2
$ and take $0 < \gamma < 1/2$. From Lemma \ref{lemaextD}, if $\| v_{*} \|_{1/2,2,\partial \Omega} \leq M$, for some $M>0$, there is $C(\Omega,\gamma,M)>0$ such that
\[
\| \widetilde{v_{*}} \|_{1,2,\Omega} \leq C(\Omega,\gamma,M)\| v_{*} \|_{1/2,2,\partial \Omega}
\]
and consequently, the following estimate holds for $u$
\[
\| u\|_{\mathcal V}  \leq   C(M,\Omega)  \, \mathrm{Re} \, ( \|v_*\|_{1/2,2,\Omega} + \|v_*\|^2_{1/2,2,\Omega} ) .
\]
The estimates for $\|\nabla u\|_{2,\Omega}$ and for $|\xi|$ and $|\omega|$ follow from \eqref{kor} and \eqref{abd}. Finally, combine the estimates for $\|\nabla u\|_{2,\Omega}$ and $\|\nabla \widetilde{v_{*}}\|_{2,\Omega}$ to obtain the desired estimate for $\|\nabla v\|_{2,\Omega}$.
\end{proof}

\end{document}